\documentclass[reqno]{amsart}
\usepackage{amsmath,amsthm,amssymb}
\usepackage{latexsym}
\usepackage{eucal}
\usepackage{fullpage}



\newtheorem{theorem}{Theorem}[section]
\newtheorem{lemma}{Lemma}[section]
\newtheorem{corollary}{Corollary}[section]

\newtheorem{proposition}{Proposition}[section]

\theoremstyle{definition}

\newtheorem{open problem}{Open Problem}

\def\cal{\mathcal}
\let\Re=\undefined
\DeclareMathOperator{\Re}{Re}
\let\Im=\undefined
\DeclareMathOperator{\Im}{Im}

\begin{document}
\title[The Sobolev norms and localization \ldots
 ]{The Sobolev norms and localization on the Fourier side for solutions
 to some evolution equations }
\author{Sergey A. Denisov}
\address{
\begin{flushleft}
University of Wisconsin--Madison\\  Mathematics Department\\
480 Lincoln Dr., Madison, WI, 53706, USA\\
  denissov@math.wisc.edu
\end{flushleft}
  }
 \maketitle
\begin{abstract}
In this paper, some evolution equations with rough time-dependent
potential are studied in the case of one-dimensional torus. We show
that the solution has higher regularity for the generic values of
the coupling parameter. We also control the localization of these
solutions on the Fourier side.
\end{abstract} \vspace{1cm}
\Large

\section{Introduction}
Let $P(x)$ be an algebraic polynomial with real-valued
time-dependent coefficients
\[
P(x)=\sum_{j=1}^d p_jx^j, \quad p_j(t)\in L^1_{\rm loc}(\mathbb{R})
\]
One can consider the following evolution equation
\begin{equation}\label{gen-evol}
iu_t=(kP(i\partial_x)+V)u, \quad u(x,0,k)=1, \quad x\in \mathbb{T},
\quad k\in \mathbb{R}
\end{equation}
where the potential $V(x,t)$ satisfies
\begin{equation}\label{l2}
\|V(x,t)\|_{L^\infty(\mathbb{T})}\in L^1_{\rm loc}(\mathbb{R}^+)
\end{equation}
Since $p_j\in\mathbb{R}$, the unperturbed evolution (i.e. when
$V=0$) defines a unitary group in $L^2(\mathbb{T})$. The assumption
(\ref{l2}) allows one to iterate the Duhamel formula (see
\cite{simon2}) and show that the resulting series converges in
$L^2(\mathbb{T})$. That implies the $L^2(\mathbb{T})$ norm of the
solution is bounded for any $t$ however it might grow as
$t\to\infty$. One question we want to address in this paper is what
happens to the Sobolev norms? Are they bounded for $t>0$ and, if so,
how fast can they grow as $t\to\infty$?

The case of real-valued $V$ is a very special one and we will mostly
focus on that situation. Indeed, if $V\in \mathbb{R}$, then
$\|u(x,t,k)\|_{L^2(\mathbb{T})}=1$ for all $t$. Let $\widehat f_n$
denote the Fourier transform of $f(x)$ in the variable $x\in
\mathbb{T}$. For real-valued $V$, we will study the localization of
solution on the Fourier side and its asymptotical behavior for large
time. In particular, the following question is quite natural: if
initially $\widehat u_n(0,k)=\delta_0$, then what can be said about
the size of
\[
\sum_{|n|>\mu(t)} |\widehat u_n(t,k)|^2
\]
for various $\mu(t)$? If this sum is small, then a nontrivial
$\ell^2$ norm of $\widehat u$ should be supported on the first
$\mu(t)$ frequencies because the total $\ell^2$ norm is conserved
and is equal to $1$. This problem is related to the estimates on the
Sobolev norms but is not equivalent to it. The results we obtain in
this paper answer some of these questions. What makes our setting
different from the earlier extensive work on the subject of
large-time behavior of evolution equations (see, e.g., \cite{ir} and
references there) is that we want to address these problems not for
a particular $k$ but for its ``generic" value with respect to the
Lebesgue measure. The current paper is a
 continuation of \cite{Den1} where the analogous questions were studied mostly
 by the complex analysis technique. In the proofs that follow, we develop more robust
 perturbation theory. For example, we can handle
 equations in which the parameter $k$ enters in a more complicated way, e.g.
 in (\ref{gen-evol}), instead of $k$ we can write $\lambda(k)$ where $\lambda$
  is smooth but not necessarily analytic. The main motivation to study these problems
  comes from the scattering theory
  of multidimensional Schr\"odinger operator with slowly decaying potential
  as explained in \cite{den2}.  For the case of smooth
$V$ the nontrivial upper estimates for the growth of Sobolev norms
were obtained in \cite{b1} where the Schr\"odinger evolution was
handled and the arithmetic structure was used to gain extra
regularity of solution. In the second section, the  Schr\"odinger
evolution with real $V$ is considered. The third section also
handles Schr\"odinger evolution but with general $V$. In the last
section, an asymptotical result for the non-degenerate $P$, i.e.
when $p_1\neq 0$, is obtained. The Appendix contains two auxiliary
lemmas.

For simplicity, we will study only the case of quadratic polynomials
$P(x)$, i.e., $d=2$, however the methods can be easily adjusted to
other symbols. The symbol $\|f\|$ will refer to the
$L^2(\mathbb{T})$ norm in case of a function $f$ and $\|O\|$ will
refer to the operator norm if $O$ is an operator. We will denote the
Hilbert-Schmidt norm of the operator $O$ by $\|O\|_{\cal{S}_2}$. If
$p\in [1,\infty]$, the symbol $p'$ denotes the conjugate exponent,
i.e.
\[
\frac 1p+\frac{1}{p'}=1
\]
For real $\alpha$, $[\alpha]$ stands for the integer part, $f*g$
denotes the convolution of $f$ and $g$, $\chi_A$ is the
characteristic function of the set $A$. For the norms in Sobolev
spaces we have
\[
\|f\|_{H^\alpha(\mathbb{T})}^2=\sum_{n} (1+|n|)^{2\alpha} |\widehat
f_n|^2, \quad \|f\|_{\dot{H}^\alpha(\mathbb{T})}^2=\sum_{n\neq 0}
(1+|n|)^{2\alpha} |\widehat f_n|^2
\]
For two operators $A$ and $B$, the commutator $[A,B]=AB-BA$.
\bigskip

\section{The Schr\"odinger evolution with real-valued $V$}

The important special case  of (\ref{gen-evol}) is the Schr\"odinger
evolution which corresponds to $P(x)=p_2x^2$. We first assume  that
$p_2=1$ so (\ref{gen-evol}) takes the form
\begin{equation}\label{evol}
iu_t=(-k\Delta+V)u, \quad u(x,0)=1, \quad x\in \mathbb{T}, \quad
k\in \mathbb{R},\quad \Delta=\partial^2_{xx}
\end{equation}
Let
\[
\upsilon_1(t)=\|V(x,t)\|_{L^\infty(\mathbb{T})}
\]
and $w(t)\in C[0,T]$ be arbitrary positive function. Take
\[
D_1(T)=\int_0^T \upsilon_1^2(\tau)\left(1+\int_0^\tau
\upsilon_1(\tau_1)d\tau_1\right)d\tau
\]
and
\[
D_2(T)=\left(\int_0^T \upsilon_1^2(\tau)w(\tau)d\tau\right)\left(
\int_0^T\upsilon_1^2(\tau)\int_0^\tau w^{-1}(\tau_1)d\tau_1d\tau
\right)
\]

\begin{theorem}\label{t11}
Suppose $V$ is real-valued and $\alpha<1/2$. Then
\[
\int_{\mathbb{R}} \sup_{\tau\in
[0,T]}\|u(x,\tau,k)\|^2_{\dot{H}^\alpha(\mathbb{T})}dk \lesssim
D_1(T)+D_2(T),\quad \forall T>0
\]
\end{theorem}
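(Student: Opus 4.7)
Since $u(x,0)=1$ gives $\widehat u_n(0,k)=0$ for $n\neq 0$, Duhamel's formula yields
\[
\widehat u_n(t,k)=-i\int_0^t e^{-ikn^2(t-\tau)}\widehat{Vu}_n(\tau,k)\,d\tau.
\]
I would work with the phase-corrected coefficient $W_n(t,k):=e^{ikn^2 t}\widehat u_n(t,k)$, which satisfies $|W_n|=|\widehat u_n|$, $W_n(0,k)=0$, and $|\partial_\tau W_n(\tau,k)|=|\widehat{Vu}_n(\tau,k)|\leq \upsilon_1(\tau)$; the last bound uses Parseval together with the $L^2$-conservation $\|u(\cdot,\tau,k)\|=1$ (which relies on $V$ being real) to obtain $|\widehat{Vu}_n|\leq \|Vu\|\leq \|V\|_{L^\infty}\|u\|=\upsilon_1$. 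Since $W_n(0,k)=0$, the identity $|W_n(t,k)|^2=2\Re\int_0^t \overline{W_n}\,\partial_\tau W_n\,d\tau$ gives the fundamental sup-in-time inequality
\[
\sup_{\tau\in[0,T]}|\widehat u_n(\tau,k)|^2\leq 2\int_0^T |\widehat u_n(\tau,k)|\,|\widehat{Vu}_n(\tau,k)|\,d\tau.
\]

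To control the right-hand side after multiplying by $(1+|n|)^{2\alpha}$, summing over $n\neq 0$, and integrating in $k$, I would use two main ingredients. First, the AM-GM inequality $2ab\leq w(\tau)^{-1}a^2+w(\tau)b^2$ with the free positive weight $w(\tau)$, which is precisely the origin of the $w$ appearing in $D_2$. Second, Plancherel in the spectral parameter,
\[
\int_{\mathbb R}\Big|\int_0^T e^{ikn^2\tau}g(\tau)\,d\tau\Big|^2 dk=\frac{2\pi}{n^2}\int_0^T |g(\tau)|^2\,d\tau,
\]
valid whenever $g$ is independent of $k$. The factor $1/n^2$ combined with the summability $\sum_{n\neq 0}(1+|n|)^{2\alpha-2}<\infty$---which requires precisely $\alpha<1/2$---converts the pointwise bound $|\widehat V_n(\tau)|\leq \upsilon_1(\tau)$ into the $n$-summable integrand $\upsilon_1^2(\tau)$ appearing in $D_1$. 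The endpoint term $\int_{\mathbb R} |\widehat u_n(T,k)|^2 dk$ together with the sup-correction produce the $D_1$ contribution; the extra factor $1+\int_0^\tau \upsilon_1$ inside $D_1$ arises when one iterates Duhamel once more via the splitting $\widehat{Vu}_n=\widehat V_n\widehat u_0+\sum_{m\neq 0}\widehat V_{n-m}\widehat u_m$ and uses the trivial bound $|\widehat u_0(\tau,k)-1|\leq\int_0^\tau \upsilon_1$.

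The principal obstacle is that $\widehat{Vu}_n$ depends on $k$ through $u$ itself, so Plancherel cannot be applied directly to the full Duhamel integrand. My plan to overcome this is to substitute the Duhamel formula once more inside the off-diagonal sum $\sum_{m\neq 0}\widehat V_{n-m}\widehat u_m$: this introduces a second oscillatory factor $e^{-ikm^2(\tau-s)}$ and, after a weighted Cauchy--Schwarz in the inner time variable $s$, reduces the integrand to a $k$-independent kernel $\widehat V_{n-m}(\tau)\widehat V_m(s)$ to which Plancherel now applies; the result assembles into the bilinear structure $D_2$ with weights $w(\tau)$ and $w(\tau)^{-1}$. Convergence of the remaining Duhamel iterates is controlled by a Gronwall-type absorption using $\upsilon_1\in L^1_{\rm loc}$.
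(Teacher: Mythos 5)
Your strategy -- Duhamel on individual Fourier modes, phase rotation by $e^{ikn^2 t}$, a sup-in-time identity exploiting $W_n(0,k)=0$, AM--GM with a free weight $w$, and an oscillatory $k$-integration to produce decay in $n$ -- correctly identifies several of the ingredients, and your observation that $\alpha<1/2$ is tied to the convergence of $\sum_n(1+|n|)^{2\alpha-2}$ is exactly right. But there is a genuine gap in the step where you claim that a second Duhamel substitution and a weighted Cauchy--Schwarz in the inner time $s$ ``reduces the integrand to a $k$-independent kernel $\widehat V_{n-m}(\tau)\widehat V_m(s)$.'' Substituting $\widehat u_m(\tau,k)=-i\int_0^\tau e^{-ikm^2(\tau-s)}\widehat{Vu}_m(s,k)\,ds$ leaves $\widehat{Vu}_m(s,k)$ inside the integral, and this factor is still $k$-dependent; the kernel you name would arise only from the approximation $\widehat u_m(s,k)\approx\delta_{m0}$, i.e.\ the $O(V^0)$ term of the series, not the full coefficient. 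You therefore face a dilemma at each stage: either keep the oscillatory phase and confront a $k$-dependent integrand (so Plancherel does not apply), or bound $|\widehat{Vu}_m|\le\upsilon_1(s)$ crudely (which discards the phase and yields no gain in $n$). There is no single iteration that extracts the oscillation and simultaneously decouples from $u$; one would need the entire infinite Duhamel expansion, and the $k$-integrated summability of that series is precisely what the theorem asserts, so the Gronwall closure you invoke is circular.

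The paper avoids this by a structural device that the mode-by-mode analysis forfeits. Working with the tail $\psi_n=Q_nu$ and head $\phi_n=P_nu$, it uses that $\langle Q_nV\psi_n,\psi_n\rangle$ is real for real $V$, so $\|\psi_n\|^2=2\Im\int_0^t\langle Q_nV\phi_n,\psi_n\rangle\,d\tau$ involves only the off-diagonal block $Q_nVP_n$; the self-coupling $Q_nVQ_n$ of the tail with itself cancels exactly. Your scalar identity $\partial_t|\widehat u_n|^2=2\Im\bigl(\overline{\widehat u_n}\,\widehat{Vu}_n\bigr)$ cancels only the single term $\widehat V_0\widehat u_n$, leaving all other $|m|>n$ modes inside $\widehat{Vu}_n$ -- these are precisely the contributions that cannot be reached by one more Duhamel step. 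After a single integration by parts against the interaction-picture kernel $Q_n\widetilde V_{[\tau,t]}P_n$, the paper reaches a quadratic inequality $z^2\lesssim a_2z+a_1$ for $z=\sup_\tau\|\psi_n\|$, and the $k$-integral of its coefficients is controlled by Lemma~\ref{lema1}, which uses Carleson's maximal-function theorem (because of the $\sup_S$ in time) together with the off-diagonal resonance count in $m^2-n^2$ to give the rate $N^{-1}\log N$. Your Plancherel identity, which produces $n^{-2}$ only for $k$-independent data and carries no $\sup_S$, does not capture this.
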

In particular, if $V$ is bounded on $\mathbb{T}\times \mathbb{R}^+$
then for Lebesgue a.e. $k$ the solution is $H^\alpha$ regular for
any $t$ and the norm does not grow faster than $t^{1.5+\epsilon}$
with any fixed $\epsilon>0$. We expect much stronger result to hold
and state it as an

{\bf Open problem.} Prove that
\[
\int_{\mathbb{R}} \sup_{\tau\in
[0,T]}\|u(x,\tau,k)\|^2_{\dot{H}^1(\mathbb{T})}dk \lesssim
1+\int_0^T\int_\mathbb{T} V^2(x,\tau)dxd\tau, \quad \forall T>0
\]
\smallskip
We will use the following notation
\begin{equation}\label{volna}
\widetilde V=e^{-ik\Delta t}Ve^{ik\Delta t}
\end{equation}
Take any interval $S\subseteq [0,T]$ and define the operator
$\widetilde V_S(k)$ by its matrix representation on the Fourier side
\[
\widehat{\widetilde V_S(k)}(m,n)=\int_S e^{ik(m^2-n^2) \tau}
\widehat V_{m-n}(\tau) d\tau, \quad m,n\in \mathbb{Z}
\]
Let $P_N$ be a projection to the first $N$ Fourier modes
\begin{equation}\label{proe}
\widehat{P_Nf}(n)=\chi_{|n|\leq N}\cdot \widehat f(n)
\end{equation}
and $Q_N=I-P_N$.
\begin{lemma}\label{lema1}
We have
\begin{equation}\label{l11}
\int_\mathbb{R}  \sup_{S}\|P_N\widetilde V_S(k)Q_N\|_{\cal{S}_2}^2
dk\lesssim N^{-1}\log N\int_0^T \int_{\mathbb{T}} V^2(x,t) dxdt
\end{equation}
\end{lemma}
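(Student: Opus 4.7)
The plan is to unfold the Hilbert--Schmidt norm at the level of its matrix entries, isolate each Fourier pair $(m,n)$, and close via a maximal Fourier-integral estimate combined with a combinatorial sum. First I would write
\[
\|P_N\widetilde V_S(k)Q_N\|^2_{\cal{S}_2}=\sum_{|m|\le N,\ |n|>N}|A_{m,n}(S,k)|^2,\quad A_{m,n}(S,k)=\int_S e^{ik(m^2-n^2)\tau}\widehat V_{m-n}(\tau)\,d\tau.
\]
Since every summand is nonnegative, $\sup_S\sum\le\sum\sup_S$; this sacrifices the orthogonality between distinct pairs but reduces the problem to estimating $\int_{\mathbb R}\sup_S|A_{m,n}(S,k)|^2 dk$ one pair at a time.

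For a fixed pair with $|m|\le N<|n|$, setting $H_{m,n}(t,k)=\int_0^t e^{ik(m^2-n^2)\tau}\widehat V_{m-n}(\tau)d\tau$ one has $A_{m,n}([a,b],k)=H_{m,n}(b,k)-H_{m,n}(a,k)$, hence $\sup_S|A_{m,n}|^2\le 4\sup_{0\le t\le T}|H_{m,n}(t,k)|^2$. Since $|m|\le N<|n|$ forces $m\ne\pm n$, the linear change of variable $\xi=k(m^2-n^2)$ converts $H_{m,n}$ into the partial Fourier integral of $\widehat V_{m-n}\chi_{[0,T]}$ at frequency $\xi$ truncated at $t$, and the $L^2(d\xi)$ bound for the supremum of this family is precisely the continuous Carleson--Hunt maximal inequality. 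This yields
\[
\int_{\mathbb R}\sup_{0\le t\le T}|H_{m,n}(t,k)|^2\,dk\lesssim\frac{1}{|m^2-n^2|}\int_0^T|\widehat V_{m-n}(\tau)|^2\,d\tau.
\]

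It remains to reindex by $\ell=m-n$ and verify
\[
S_\ell:=\frac{1}{|\ell|}\sum_{\substack{|m|\le N\\|m-\ell|>N}}\frac{1}{|2m-\ell|}\lesssim\frac{\log N}{N}\quad\text{uniformly in }\ell\ne 0,
\]
after which Plancherel $\sum_\ell\int_0^T|\widehat V_\ell(\tau)|^2d\tau=\int_0^T\int_\mathbb T V^2\,dx\,d\tau$ finishes the proof. I would split into three regimes. For $|\ell|\le N$ the side constraint pins $m$ to $|\ell|$ integers on which $|2m-\ell|\ge 2N-|\ell|\ge N$, giving $S_\ell\lesssim 1/N$. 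For $N<|\ell|\le 2N$ the inner sum is a harmonic sum of length $\lesssim |\ell|$ and hence $\lesssim\log N$, so $S_\ell\lesssim|\ell|^{-1}\log N\lesssim N^{-1}\log N$. For $|\ell|>2N$ the constraint becomes automatic but $|2m-\ell|\ge|\ell|-2N$, and an elementary harmonic estimate yields $S_\ell\lesssim|\ell|^{-1}\log\bigl(1+\tfrac{4N}{|\ell|-2N}\bigr)\lesssim N^{-1}\log N$.

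The one nonelementary ingredient, and therefore the main obstacle, is the continuous Carleson--Hunt maximal inequality used in the per-pair step; it is the continuous analogue of Carleson's theorem on a.e.\ convergence of Fourier series. If one prefers to avoid it, a dyadic decomposition of the endpoints of $S$ combined with a Menshov--Rademacher orthogonality argument would recover a weaker version carrying an extra polylogarithmic factor in $T$, which might still suffice for the applications in the rest of the paper. The combinatorial bound on $S_\ell$ is otherwise routine but genuinely requires the three-regime split, since no single uniform estimate covers all $\ell$.
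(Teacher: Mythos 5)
Your proof is correct and follows essentially the same route as the paper's: the Hilbert--Schmidt norm is unfolded entrywise, the supremum over intervals $S$ is moved inside the double sum and replaced by the two-endpoint reduction $\sup_S|A_{m,n}|\le 2\sup_t|H_{m,n}(t,\cdot)|$ (which the paper encodes directly as the maximal function $q_l$), and the continuous Carleson--Hunt inequality yields the per-pair $L^2_k$ bound $\frac{1}{|m^2-n^2|}\|\widehat V_{m-n}\|_2^2$. The only cosmetic difference is that you parametrize the residual combinatorial sum by $(\ell,m)$ with $\ell=m-n$ and run a three-regime case check, whereas the paper passes to $(\alpha,\beta)=(n-m,m+n)$ and splits into two ranges of $\alpha$; both produce the same uniform $N^{-1}\log N$ bound on the coefficient sum.
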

\begin{proof}
For any $S$,
\[
 \sum_{|m|\leq N} \sum_{|n|> N} \left|\int_S \widehat
V_{m-n}(t)e^{ik(m^2-n^2)t}dt\right|^2\leq \sum_{|n|<N} \sum_{|m|\geq
N} |q_{m-n}(k(m^2-n^2))|^2
\]
where
\[
q_l(k)= \sup_{S}\left|\int_S \widehat V_l(t)e^{ikt}dt\right|
\]
By Carleson's theorem on maximal functions \cite{Carl}, we have
\[
v_l=\|q_l\|_{2}\lesssim \|\widehat V_l\|_{2}
\]
The l.h.s. in (\ref{l11}) is bounded by $T_1+T_2$, where
\[
T_1=\sum_{|m|\leq N}\sum_{n> N} \frac{v_{m-n}^2}{n^2-m^2}
\]
and
\[
T_2=\sum_{|m|\leq N}\sum_{n<-N} \frac{v_{m-n}^2}{n^2-m^2}
\]
Since $T_1=T_2$, we only estimate $T_1$. If $\alpha=n-m, \beta=m+n$,
then
\begin{equation}\label{two-terms}
T_1\lesssim  \sum_{\alpha>2N} \frac{v^2_\alpha}{\alpha}
\sum_{\beta=\alpha-2N}^{\alpha+2N}
\frac{1}{\beta+1}+\sum_{\alpha=1}^{2N}
\frac{v^2_\alpha}{\alpha}\sum_{\beta=2N-\alpha}^{2N+\alpha}
\frac{1}{\beta+1}\lesssim  N^{-1}\sum_{\alpha=1}^\infty
v_\alpha^2+\sum_{\alpha=N}^{3N} \frac{v_\alpha^2}{\alpha}\log
\frac{|2N+\alpha|}{|2N-\alpha|+1}
\end{equation}
and that finishes the proof.
\end{proof}
{\bf Remark.} The logarithmic factor in the estimate above is not
present when the Laplacian $\Delta$ is restricted to the Hardy space
$\cal{H}^2(\mathbb{T})$. It is also negligible when the average in
$N$ is taken. Indeed, we have
\[
\sum_{N=1}^\infty\sum_{\alpha=N}^{3N} \frac{v_\alpha^2}{\alpha}\log
\frac{|2N+\alpha|}{|2N-\alpha|+1}\lesssim \sum_{\alpha=1}^\infty
\frac{v_\alpha^2}{\alpha} \int_{\alpha/3}^\alpha\log
\frac{|2x+\alpha|}{|2x-\alpha|+1}dx\lesssim \sum_{\alpha=1}^\infty
v_\alpha^2
\]
so the second term in (\ref{two-terms}), rather than the first one,
is in $\ell^1$.\smallskip

Now, we are ready to prove theorem \ref{t11}. The technique will
resemble the one used in \cite{kis} for the matrices $2\times 2$.

\begin{proof}{\it (of theorem \ref{t11})}
Consider $\phi_n=P_nu, \psi_n=Q_nu$. Then,
\[
i\partial_t\psi_n=(-kQ_n\Delta Q_n +Q_nVQ_n)\psi_n+Q_nV\phi_n, \quad
\psi_n(x,0,k)=0
\]
and so
\begin{equation}\label{nno}
\|\psi_n\|^2= 2\Im \int_0^t \langle Q_nV\phi_n,\psi_n\rangle d\tau
\end{equation}
If $\phi_n=e^{ik\Delta t}\widetilde \phi_n$  and $\psi_n=e^{ik\Delta
t}\widetilde \psi_n$, then
\begin{equation}\label{deriv}
i\partial_t \widetilde \psi_n=(Q_n\widetilde V Q_n)\widetilde
\psi_n+(Q_n\widetilde V P_n) \widetilde \phi_n, \quad i\partial_t
\widetilde \phi_n=(P_n\widetilde V P_n)\widetilde
\phi_n+(P_n\widetilde V Q_n) \widetilde \psi_n
\end{equation}
and integration by parts in (\ref{nno}) yields
\[
\|\psi_n(t,k)\|^2\lesssim  I_1+I_2
\]
where
\[
I_1=\Im \int_0^t \langle \Bigl( Q_n\widetilde V_{[\tau,
t]}P_n\Bigr)\widetilde \phi_n', \widetilde\psi_n\rangle d\tau,
\,I_2=\Im \int_0^t \langle \Bigl( Q_n\widetilde V_{[\tau,
t]}P_n\Bigr)\widetilde\phi_n, \widetilde\psi_n'\rangle d\tau
\]
For $I_1$, we have
\[
I_1\lesssim \int_0^t \|Q_n\widetilde V_{[\tau,t]}P_n\|\cdot
\|V\|\cdot \|\psi_n\|d\tau
\]
due to (\ref{deriv}) and $\|\phi_n\|\leq 1, \|\psi_n\|\leq 1$.

 For $I_2$, we substitute (\ref{deriv}) to get
\begin{eqnarray}
I_2\lesssim \int_0^t \|Q_n\widetilde V_{[\tau,t]}P_n\|\cdot
\|V\|\cdot \|\psi_n\|d\tau+\nonumber
\\
\left|\Re \int_0^t \langle \left(\int_\tau^t Q_n\widetilde V(\tau_1)
P_nd\tau_1\right) \widetilde \phi_n(\tau), (Q_n\widetilde
V(\tau)P_n) \widetilde \phi_n(\tau)\rangle d\tau \right|
\label{vtor}
\end{eqnarray}
For the last term, we can use the following identity
\[
2\Re \int_0^t \langle Z'(\tau)y(\tau),Z(\tau)y(\tau)\rangle d\tau=
\|Z(\tau)y(\tau)\|^2\Bigl|_{\tau=0}^{\tau=t}-2\Re \int_0^t \langle
Z(\tau)y'(\tau), Z(\tau)y(\tau)\rangle d\tau\Bigr.
\]
Thus, the second term in (\ref{vtor}) is bounded by
\[
a_1(t,k)=\|Q_n \widetilde V_{[0,t]} P_n\|^2+\int_0^t \|Q_n
\widetilde V_{[\tau,t]} P_n\|^2\|V(\tau)\|d\tau
\]
If we denote
\[
a_2(t,k)=\int_0^t \|Q_n\widetilde V_{[\tau,t]}P_n\|\cdot \|V\| d\tau
\]
and $z(k)=\max_{\tau\in [0,T]} \|\psi_n\|=\|\psi_n(t(k),k)\|$, then
the quadratic inequality
\[
z^2\lesssim a_2z+a_1
\]
gives
\begin{equation}
z\lesssim a_2+\sqrt{a_1}
\end{equation}
In other words,
\begin{eqnarray*}
\sup_{\tau\in [0,T]}\sum_{|j|>n} |\widehat u_j(\tau,k)|^2\lesssim
a_2^2(t(k),k)+a_1(t(k),k)\lesssim \\\left(\int_0^T
w(\tau)\|V(.,\tau)\|_{L^\infty(\mathbb{T})}^2d\tau\right)\left(\int_0^T
w^{-1}(\tau)\sup_{S\subseteq [\tau,T]}\|Q_n \widetilde V_{S}
P_n\|^2d\tau\right)+a_1(t(k),k)
\end{eqnarray*}
where we applied Cauchy-Schwarz. By lemma \ref{lema1}, we have
\[
\int_\mathbb{R}  \sup_{\tau\in [0,T]}\sum_{|j|>n} |\widehat
u_j(\tau,k)|^2dk\lesssim \frac{\log n}{n}(D_1+D_2)
\]
where $D_{1(2)}$ were introduced above. Multiply the last estimate
by $|n|^{-\epsilon}$ and sum in $n\neq 0$ to get theorem \ref{t11}.
\end{proof}
Assume that $\alpha<1/2, \gamma>3/4$ is fixed and
$\upsilon_1(t)\lesssim (1+t)^{-\gamma}$.  Take $w(t)=(1+t)^{1/2}$.
Then, we have the following striking estimate
\[
\sup_{t>0}\|u\|_{H^\alpha(\mathbb{T})}<\infty
\]
for a.e. $k$. This is a remarkable fact as we do not assume any
smoothness of $V$ at all.
\bigskip

In the rest of this section, we will consider the problem which is
directly related to the multidimensional scattering \cite{den2}. We
again take $P(x)=c_2x^2$ but now the coefficient decays in $t$
\[
c_2(t)=\frac{1}{(1+t)^2}
\]
The difficult problem in this area is to show that the solution has
localization $\mu(t)\leq t$ for a.e. $k$ as longs as potentials $V$
satisfies some decay condition, e.g.
\[
|V(x,t)|<C(1+t)^{-1/2-\epsilon}, \quad \epsilon\in (0,1/2)
\]
In fact, this is not known even for $\epsilon$ close to $1/2$.

Below we will give a partial solution to this problem in the case
when $V$ oscillates. This will improve on the earlier result from
\cite{Den1}. The following proof is general enough
 to handle initial data of the form $u(x,0,k)=e^{ijx}$ for
 any $j$ and thus it yields that the whole monodromy matrix for (\ref{gen-evol}) is
 ``almost diagonal".

\begin{theorem}
Suppose that $V$ can be written as $V=Q_x(x,t)/(t+1)$ where $Q$ is
real valued and
\begin{equation}\label{con2}
\|Q\|_{L^\infty(\mathbb{T})}<\lambda(t+1)^{-\gamma},\,
\|Q_x\|_{L^\infty(\mathbb{T})}<\lambda(t+1)^{1-\gamma}, \quad
\gamma>3/4
\end{equation}
Let $V_T=V\cdot \chi_{t>T}$ and $u_T$ be corresponding solution.
Then, we have
\[
\int_\mathbb{R}\sup_{t>T}\|1-u_T(x,t,k)\|^4dk \lesssim \lambda^4
(T+1)^{3-4\gamma}
\]
\end{theorem}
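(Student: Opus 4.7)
The plan is to pass to the interaction picture with respect to the time-dependent free propagator $U_0(t,s)$ of $i\partial_t u = -(k/(1+t)^2)\Delta u$, which acts on the $n$-th Fourier mode by $e^{-ikn^2\theta(t,s)}$, where $\theta(t,s)=1/(1+s)-1/(1+t)$. Since $U_0\cdot 1 = 1$, defining $\Phi(t):=U_0(t,T)^{-1}u_T(t)-1$ preserves the $L^2$ norm, $\|1-u_T(t)\|=\|\Phi(t)\|$, and Duhamel yields
\[
\Phi(t)=\Phi_0(t)+\Psi(t),\quad \Phi_0(t)=-i\int_T^t\widetilde V(s)\,ds,\quad \Psi(t)=-i\int_T^t\widetilde V(s)\Phi(s)\,ds,
\]
with $\widetilde V(s)=U_0(s,T)^{-1}V(s)\,U_0(s,T)$. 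I will estimate $\int\sup_t\|\Phi_0\|^4\,dk$ and $\int\sup_t\|\Psi\|^4\,dk$ separately.

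Using $V=Q_x/(1+s)$ and the change of time variable $\xi=\theta(s,T)$, $d\xi=ds/(1+s)^2$, the Fourier coefficients become truncated oscillatory integrals
\[
\widehat{\Phi_0(t)}_n = n\int_0^{\theta(t,T)} e^{ikn^2\xi}\,\widehat{Q}_n(s(\xi))(1+s(\xi))\,d\xi,\qquad n\neq 0.
\]
The key computation is to expand $\|\Phi_0\|^4=\sum_{n,m\ne 0}|\widehat{\Phi_0}_n|^2|\widehat{\Phi_0}_m|^2$ and perform the $k$-integration first: the phase $e^{ik[n^2(\xi_1-\xi_2)+m^2(\xi_3-\xi_4)]}$ integrates to the resonance $\delta$-function $2\pi\delta(n^2(\xi_1-\xi_2)+m^2(\xi_3-\xi_4))$, which collapses one $\xi$-variable and contributes a Jacobian $1/\max(n,m)^2$. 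After returning to the $s$-variable, the remaining three-fold integral is bounded by $\lambda^4\int_T^\infty(1+s)^{2-4\gamma}\,ds$, integrable on $[T,\infty)$ exactly when $\gamma>3/4$ and producing the claimed factor $(T+1)^{3-4\gamma}$. The supremum over $t$ (i.e., over the upper endpoint of integration) is absorbed either by an adaptation of Lemma \ref{lema1} to the interaction-picture operator, or by the $L^p$-Carleson--Hunt bound for the maximal truncated Fourier integral applied mode by mode.

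For the correction $\Psi$, I iterate Duhamel once more, writing $\Psi(t)=-i\int_T^t\widetilde V(s)\Phi_0(s)\,ds+O(\widetilde V^3)$. The first piece carries two factors of $V$ and is handled by the same $\delta$-function/Plancherel scheme (with an extra $\widehat V_{n-m}$ convolution); the $O(\widetilde V^3)$ tail is treated via Gr\"onwall, using the uniform bound $\|\Phi\|\le 2$ and $\int_T^\infty\|V\|_{L^2}^2\,ds\lesssim\lambda^2(T+1)^{1-2\gamma}$. The main obstacle I anticipate is in the summation over the modes $n,m$ in the linear step: bounding each pair independently by Cauchy--Schwarz loses factors of $|n|$ coming from the symbol of $\partial_x$, which are not absorbed by the decay $|\widehat{Q}_n|\le\|Q_x\|_{L^\infty}/|n|$ alone. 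The remedy will be to retain the $\delta$-function coupling between $n$ and $m$ and interpolate between the two $L^\infty$ bounds on $Q$ and $Q_x$ after splitting the sum at the crossover threshold $|n|\sim 1+T$, so that summability is restored throughout the open range $\gamma>3/4$.
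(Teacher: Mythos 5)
Your outline goes after the wrong quantity and, as a result, runs into obstacles that the paper never has to face. There are two structural ideas in the paper's proof that you are missing, and without them the argument does not close.

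\textbf{(1) Reduction to the zero Fourier mode.} Because $V$ is real, $\sum_j|\widehat u_j(t,k)|^2\equiv 1$. The paper then observes (formula \eqref{trick1}) that
\[
\|1-u\|^2=|1-\widehat u_0|^2+\sum_{j\neq 0}|\widehat u_j|^2
=|1-\widehat u_0|^2+\bigl(1-|\widehat u_0|^2\bigr)\lesssim|1-\widehat u_0|,
\]
so the fourth power of the $L^2(\mathbb{T})$ distance is controlled by the \emph{square} of a single scalar, and the whole theorem collapses to showing $\int_{\mathbb{R}}\sup_{t>T}|1-\widehat u_0(t,k)|^2\,dk\lesssim\lambda^4(T+1)^{3-4\gamma}$. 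You instead try to bound $\int\sup_t\|\Phi\|^4\,dk$ as a genuine sum over all Fourier modes. This is a strictly harder target: your ``expand $\|\Phi_0\|^4=\sum_{n,m}|\widehat{\Phi_0}_n|^2|\widehat{\Phi_0}_m|^2$ and integrate in $k$ first'' step needs both a summability in $(n,m)$ and a $\sup_t$, and the $\delta$-function trick $\int e^{ik[n^2(\xi_1-\xi_2)+m^2(\xi_3-\xi_4)]}\,dk=2\pi\delta(\cdots)$ only applies with $t$ fixed; once you replace the Fourier integral by the maximal truncated one to handle $\sup_t$ (Carleson--Hunt, applied mode by mode, as you suggest), you lose the fourth-moment/Plancherel computation and recover only an $L^2_k$ (not $L^4_k$) estimate on $\Phi_0$ from which the stated bound does not follow.

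\textbf{(2) Integration by parts in time, rather than Duhamel iteration.} The paper does not iterate Duhamel. It writes the exact identity
\[
\widehat u_0(t,k)-1=\sum_{j\neq 0}\int_T^t\frac{j\,\widehat Q_j(\tau)}{\tau+1}\,e^{ikj^2/(\tau+1)}\,\widehat{\widetilde u}_j(\tau,k)\,d\tau,
\]
integrates by parts so that a derivative falls on $\widehat{\widetilde u}_j$, uses $\|\partial_\tau\widetilde u\|\lesssim\lambda(1+\tau)^{-\gamma}$ from the equation, splits by Cauchy--Schwarz with a weight $(1+\tau)^{\pm(1+\epsilon)}$, and applies the Carleson maximal theorem (after the very change of variables $\xi=(1+\tau)^{-1}$ you propose). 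This is a \emph{single} step; no Gr\"onwall is needed. In your proposal the ``$O(\widetilde V^3)$ tail'' is supposedly handled ``via Gr\"onwall, using $\int_T^\infty\|V\|_{L^2}^2\,ds$''. That cannot work: the Duhamel kernel is controlled by the operator norm $\|V\|_{L^\infty(\mathbb{T})}\lesssim\lambda(1+s)^{-\gamma}$, which for $\gamma\in(3/4,1)$ is \emph{not} in $L^1_t([T,\infty))$, so the iteration does not converge uniformly and no naive Gr\"onwall on $[T,\infty)$ closes. The quantity $\int\|V\|_{L^2(\mathbb{T})}^2\,ds$ is $L^1$ in time but it is not what controls the operator norm in the Duhamel recursion; it only appears after the integration by parts, which your scheme does not perform. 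Once you replace the multi-step Duhamel by the single integration-by-parts identity for $\widehat u_0$ and invoke \eqref{trick1}, the argument is short and matches the paper.
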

\begin{proof}
We will suppress the dependence of $u$ on $T$ and will write $u$
instead of $u_T$. Let $\widehat u_n(t,k)=e^{-in^2k/(t+1)}\widehat
{\widetilde u}_n(t,k)$. For the zero Fourier mode of $u$, we have
\[
\widehat u_0(t,k)=1+\sum_{|j|\geq 1}^\infty \int_T^t \frac{j\widehat
Q_j(\tau)}{\tau+1}e^{ikj^2/(\tau+1)}\widehat{\widetilde
u}_j(\tau,k)d\tau
\]
Integration by parts gives
\[
|\widehat u_0-1|\lesssim \left|\sum_{|j|\geq 1}^\infty \int_T^t
\left(\int_\tau^t \frac{j\widehat Q_j(\tau_1)}{\tau_1+1}e^{ikj^2
/(\tau_1+1)}d\tau_1\right)\widehat{\widetilde u'}_j(\tau,k)d\tau\right|
\]
From Cauchy-Schwarz and $\|\partial_t \widetilde u\|\lesssim
\lambda(t+1)^{-\gamma}$, we have
\begin{eqnarray*}
\sup_{t>T}|\widehat u_0-1|^2\lesssim \hspace{10cm}
\\\lambda^2\left(\int_T^\infty (\tau+1)^{-1-\epsilon}d\tau
\right)\left(\int_T^\infty
(\tau+1)^{-2\gamma+1+\epsilon}\sum_{|j|\geq 1}^\infty
\left(\sup_{I\subseteq [\tau,\infty)}\left| \int_I \frac{j\widehat
Q_j(\tau_1)}{\tau_1+1}e^{ikj^2 /(\tau_1+1)}d\tau_1
\right|\right)^2d\tau\right)
\end{eqnarray*}
and the Carleson theorem implies (after the change of variables
$\xi=(\tau_1+1)^{-1}$ for the integral in $\tau_1$)
\[
\int_\mathbb{R}\sup_{t>T}|\widehat u_0-1|^2dk\lesssim
\lambda^4(T+1)^{3-4\gamma}
 \]
Then, notice that
\begin{equation}\label{trick1}
\sum_{j} |\widehat u_j|^2=1,
\quad 1-|\widehat u_0|^2\leq 2 |1-\widehat u_0|
\end{equation}
and then
\[
\|1-u\|^2= |1-\widehat u_0|^2+\sum_{j\neq 0} |\widehat
u_j|^2\lesssim |1-\widehat u_0|
\]
This estimate finishes the proof.
\end{proof}
This theorem immediately implies for $T=0$ and small $\lambda$ that
for the positive measure set of $k$ the solution has a zero mode
bounded away from origin for all time. It also implies the
localization on the Fourier
 side  but only  in the regime of small
  $\lambda$. \bigskip

\section{The Schr\"odinger flow with complex-valued  $V$}

In this section, we again study (\ref{evol}) but we do not assume
that $V$ is real-valued and thus the $L^2$ norm of the solution is
not necessarily conserved. However, for generic $k$, not only the
$L^2$ norm will be bounded but also the Sobolev norms
$H^\alpha(\mathbb{T})$ where $\alpha<1$. This is an improvement on
the results from the second section however the bound will be
exponential in time. The proof will be based on the concept of the
variation norm as advocated in \cite{VN} with ideas going back to
\cite{tl}. The key result from \cite{VN} we will use is the
following estimate (see \cite{VN}, $(68)$, Appendix B): if ${\cal
P}=\{\Delta_j\}, \Delta_j=[t_j,t_{j+1})$ is any partition of
$\mathbb{R}$, then
\begin{equation}\label{varya}
\left\|\sup_{\cal P} \left(\sum_j \left|\int_{t_j}^{t_{j+1}}
f(t)e^{ikt}dt\right|^2\right)^{1/2}\right\|_{L^{p'}(\mathbb{R})}\lesssim
\|f\|_p, \quad p\in [1,2)
\end{equation}
We first restrict the problem (\ref{evol}) to the case of finite
matrices. Instead of (\ref{evol}), consider
\begin{equation}\label{comple}
X_t=e^{-ik\Delta t}V^{(N)}e^{ik\Delta t} X, \quad
X(0,k)=I_{(2N+1)\times (2N+1)}
\end{equation}
where $V^{(N)}=P_{N}VP_{N}$ so we will be dealing with matrices of
the finite size but the estimates we obtain must be independent of
$N$. Assume first that the time $t\in [0,1]$, we will handle the
intervals $[0,T]$ by scaling later.

 Let us go to the Fourier side
and then
\[
\widehat X_t=\widetilde V^{(N)}\widehat X, \quad \widehat X(0,k)=I
\]
where $\widetilde V$ is given by (\ref{volna}). Consider the scale
$\ell^{2,\alpha}$ of the weighted $\ell^2$ spaces with the norm
\[
\|f\|_{2,\alpha}=\left(\sum_{|j|\leq N}
|f_j|^2(1+|j|)^{2\alpha}\right)^{1/2}
\]
If $O$ is a linear operator in $\mathbb{C}^{2N+1}$, we will denote
its operator norm in $\ell^{2,\alpha}$ by $\|O\|_{\alpha}$. On the
group $\cal{G}=GL(2N+1, \mathbb{C})$, consider the following metric
\[
d_{\cal{G}}(A,B)=\inf_{\gamma}\int_0^1 \|\gamma'\gamma^{-1}\|_\alpha
dt
\]
where $\gamma(t)$ is any continuously differentiable path in
$\cal{G}$ such that $\gamma(0)=A$ and $\gamma(1)=B$. Here we assume
that both $A$ and $B$ lie in the same connected component of
$GL(2N+1, \mathbb{C})$.\bigskip

{\bf Remark.} Let $\gamma$ be any curve such that
\[
\int_0^1 \|\gamma'\gamma^{-1}\|_\alpha dt<d_{\cal{G}}(A,B)+1
\]
Then
\[
\gamma(t)=A+\int_0^t \gamma'(\tau)\gamma^{-1}(\tau)\gamma(\tau)d\tau
\]
and therefore
\[
\|\gamma(t)\|_\alpha\leq
\|A\|_\alpha+\int_0^t\|\gamma'(\tau)\gamma^{-1}(\tau)\|_\alpha\|\gamma(\tau)\|_\alpha
d\tau
\]
By Gronwall-Bellman, we have
\begin{equation}\label{metr}
\|B\|_\alpha\lesssim \|A\|_\alpha \exp\left(\int_0^1
\|\gamma'(\tau)\gamma^{-1}(\tau) \|_\alpha d\tau\right)\lesssim
\|A\|_\alpha \exp\left(d_{\cal{G}}(A,B)\right)
\end{equation}
\bigskip

\begin{theorem}\label{coole}
Suppose $V(x,t)\in L^\infty(\mathbb{T}\times [0,1])$ and $X(t,k)$ is
the solution to (\ref{comple}) on the interval $[0,1]$. Let
$\alpha\in(0,1)$ and $p\in (4/3,2)$ so that $\alpha p<2(p-1)$. Then,
we have
\[
\sup_{0<t<1}\log (1+\|\widetilde X(t,k)\|_\alpha)\lesssim
1+\|V\|_\infty^{1+s_0}+\|V\|_\infty^{1-s^2_0}U^{s_0(1+s_0)}(k)
\]
where
\[
s_0=\alpha p'/2 \quad {\rm and}\quad \|U(k)\|_{p'}\lesssim
\|V\|_\infty=\sup_{x\in \mathbb{T},t\in [0,1]}|V(x,t)|
\]
\end{theorem}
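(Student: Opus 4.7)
The plan is to apply (\ref{metr}) along a path from $I$ to $\widetilde X(1,k)$ whose tangent $\|\gamma'\gamma^{-1}\|_\alpha$ is much smaller than the pointwise $\|\widetilde V(\tau)\|_\alpha$ (which, for $V\in L^\infty$ only, need not be controlled by $\|V\|_\infty$ in the $\alpha$-weighted norm, so the trivial choice $\gamma=\widetilde X$ gives nothing). The gain is extracted by discretizing time on a scale $h$ to be chosen, replacing the local evolution by the matrix exponential of the time average of $\widetilde V$, and using (\ref{varya}) to control those averages on the Fourier side.

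Concretely, partition $[0,1]$ into intervals $\Delta_j=[(j-1)h,jh]$, $j=1,\dots,M=\lceil 1/h\rceil$. Let $Y_j$ denote the propagator of (\ref{comple}) on $\Delta_j$ and set $W_j=\int_{\Delta_j}\widetilde V^{(N)}(\tau)\,d\tau$; a Magnus expansion gives $Y_j=e^{W_j}+E_j$ with $\|E_j\|\lesssim(\|V\|_\infty h)^2$ in operator norm. On each $\Delta_j$ join $\widetilde X(t_{j-1})$ to $\widetilde X(t_j)$ by the exponential path $s\mapsto e^{sW_j}\widetilde X(t_{j-1})$, $s\in[0,1]$, followed by a short correction to $Y_j\widetilde X(t_{j-1})$. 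Concatenating these segments and applying (\ref{metr}) at every $t\in[0,1]$ yields
\[
\log(1+\|\widetilde X(t,k)\|_\alpha)\lesssim 1+\sum_{j=1}^M\|W_j\|_\alpha+\|V\|_\infty^2 h,
\]
where the final term accumulates the $\alpha$-norms of the $E_j$, bounded uniformly in $N$ by a Gronwall bootstrap that reuses (\ref{metr}) on the error factor.

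The matrix entries $\widehat{W_j}(m,n)=\int_{\Delta_j}\widehat V_{m-n}(\tau)e^{ik(m^2-n^2)\tau}\,d\tau$ admit two complementary bounds: the crude $|\widehat{W_j}(m,n)|\le h\|V\|_\infty$ and, after the substitution $\omega=k(m-n)(m+n)$ in (\ref{varya}), the Carleson-variation estimate
\[
\Bigl(\sum_{j=1}^M|\widehat{W_j}(m,n)|^2\Bigr)^{1/2}=F_{m,n}(k),\qquad \|F_{m,n}\|_{L^{p'}(\mathbb{R})}\lesssim |(m-n)(m+n)|^{-1/p'}\|\widehat V_{m-n}\|_p.
\]
Interpolating these with weight $s_0=\alpha p'/2$ and applying Schur's test for the operator norm on $\ell^{2,\alpha}$ bounds $\sum_j\|W_j\|_\alpha$ by $h^{-\beta}\|V\|_\infty^{1-s_0}U^{s_0}(k)$ for a suitable $\beta=\beta(s_0)>0$, where $U(k)$ is a weighted aggregate of the $F_{m,n}$ with $\|U\|_{L^{p'}(\mathbb{R})}\lesssim\|V\|_\infty$. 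The condition $\alpha p<2(p-1)$, equivalently $s_0<1$, is precisely the threshold needed for the double sum over $(m,n)$ from Schur to converge. Optimizing $h\in(0,1)$ to balance $\|V\|_\infty^2 h$ against this main term then reproduces the two homogeneous pieces $\|V\|_\infty^{1+s_0}$ and $\|V\|_\infty^{1-s_0^2}U^{s_0(1+s_0)}(k)$ of the theorem.

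The main technical obstacle is the Schur estimate above: one must choose the interpolation weight so that (i) the Sobolev factor $(1+|m|)^\alpha/(1+|n|)^\alpha$ is absorbed by the oscillatory decay $|(m-n)(m+n)|^{-1/p'}$, and (ii) the resulting power of $h$ matches what the optimization forces; the sharp hypothesis $\alpha p<2(p-1)$ is exactly where the double sum ceases to converge. A secondary nontrivial point is to control $\|E_j\|_\alpha$ (as opposed to the operator norm $\|E_j\|$) uniformly in $N$, which one handles by combining the Magnus expansion with the same metric-bootstrap as in the first step.
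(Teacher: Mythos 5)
There is a genuine gap in the control of the Magnus remainder in the weighted norm. You claim $\|E_j\|\lesssim(\|V\|_\infty h)^2$ and then need this \emph{in the $\ell^{2,\alpha}$ operator norm} to close the path-length estimate, but for $V\in L^\infty(\mathbb{T}\times[0,1])$ with no smoothness, multiplication by $V$ (and hence $\widetilde V(\tau,k)$, which is its conjugate by the diagonal unitary $e^{ik\Delta\tau}$) is \emph{not} bounded on $\ell^{2,\alpha}$ by $\|V\|_\infty$ — indeed the whole content of the theorem is that $H^\alpha$ regularity is gained only generically in $k$, not pointwise. On the truncated problem the Magnus remainder therefore carries an $N$-dependent constant in the $\alpha$-norm; the ``metric bootstrap'' you invoke to absorb it would need a pointwise-in-$k$ bound on the $\alpha$-norm of the increments, which is exactly what is unavailable. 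This is not a secondary technicality but the central obstacle, and it is precisely what the paper's use of the rough-path estimate — Lemma~C.3 of \cite{VN}, quoted as (\ref{gam}) — is designed to sidestep: that lemma bounds the $V^\beta$ variation of the solution curve in the group metric by the $V^\beta$ variation of the driving noise $\int_0^t\widetilde V$, with a quadratic correction, for $\beta\in[1,2)$, and never asks for pointwise operator-norm control of $\widetilde V$ itself.

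Your discretize-and-optimize-$h$ scheme is, in spirit, an attempt to rederive a weaker version of that variation-norm estimate with a fixed time scale, but without (\ref{gam}) you cannot pay for the errors, and with it the $h$-optimization is unnecessary: the $V^\beta$ norm already takes a supremum over all partitions, so the crude $M_0$-side and the Carleson $M_1$-side of the interpolation are both scale-free and the three-lines lemma closes the argument cleanly in (\ref{vare})--(\ref{iitp}) without ever choosing $h$. Your Carleson-variation estimate on the matrix entries $\widehat{W_j}(m,n)$ after the substitution $\omega=k(m^2-n^2)$ and the appearance of the threshold $s_0<1$ (equivalently $\alpha p<2(p-1)$) as a summability condition are consistent with the paper's second lemma in this section (the commutator decomposition $\Lambda^\mu\widetilde V_{\Delta_j}\Lambda^{-\mu}=\widetilde V_{\Delta_j}+[\Lambda^\mu,\widetilde V_{\Delta_j}]\Lambda^{-\mu}$ and the Hilbert--Schmidt bound on the commutator), so that part of the plan is on the right track; but as written the Schur argument is only sketched and cannot be verified, and in any case the proof cannot start until the error-accumulation gap above is closed.
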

\begin{proof}
 Recall \cite{VN} that for the continuous curve
$\gamma(t)$ on $\cal{G}$ we can define
\[
\|\gamma\|_{V^\beta}=\sup_{{\cal P}}\left( \sum_{j=0}^{n-1}
d^\beta_{\cal{G}}(\gamma(t_{j}),\gamma(t_{j+1})) \right)^{1/\beta},
\quad \beta\in [1,\infty)
\]
where $\cal P$ is any partition. Then,
we have (\cite{VN}, lemma~C.3)
\begin{equation}\label{gam}
\|\gamma\|_{V^\beta}\leq
\|\gamma_r\|_{V^\beta}+C\min(\|\gamma_r\|^2_{V^\beta},
\|\gamma_r\|^\beta_{V^\beta})
\end{equation}
where
\[
\gamma_r(t)=\int_0^t \gamma'(s)\gamma^{-1}(s)ds
\]
and $\beta\in [1,2)$. Thus, (\ref{comple}), (\ref{metr}),
(\ref{gam}) and the simple estimate (that follows from the
definition of the variation norm)
\[
d_{\cal{G}}(\gamma(0),\gamma(1))\leq \|\gamma\|_{V^\beta}
\]
imply

\begin{equation}\label{vare}
\sup_{t\in [0,1]} \|\widehat X(t,k)\|_{\alpha}\lesssim
\exp(Q+C\min\{Q^2,Q^\beta\}), \, Q=\left\|\int_0^t \widetilde
V(\tau,k)d\tau\right\|_{V^\beta [0,1](\ell^{2,\alpha})}
\end{equation}
and thus we only need to obtain a bound for $Q$. Let
$\Lambda^\gamma$ be a diagonal operator on the Fourier side with the
diagonal elements equal to $(2+|n|)^{\gamma},\,\gamma\in
\mathbb{R}$.

To handle $\beta\in (1,2)$, we will use the standard complex
interpolation between $\beta=1$ and $\beta=2$. Take $s_0\in (0,1)$
and $\mu\in (0,1)$. Suppose we fix a partition $\cal{P}$ of the interval
$[0,1]$. Then, for any $s\in (0,1)$, take $\beta(s)=s+1,
q(s)=(s+1)/s$. The $\ell^\beta$ norm can be written as
\begin{eqnarray*}
\left(\sum_{j}\left\|\int_{t_j}^{t_{j+1}}  \Lambda^{s\mu} \widetilde
V(\tau,k)
\Lambda^{-s\mu}d\tau\right\|^{\beta(s)}\right)^{1/\beta(s)}=\hspace{8cm}
\\
\max_{\|\eta\|_{\ell^{q(s_0)}}=1}\sum_{j}\left\|\int_{t_j}^{t_{j+1}}
\Lambda^{s\mu} \widetilde V(\tau,k)
\Lambda^{-s\mu}d\tau\right\||\eta_j|^{q(s_0)/q(s)}
\end{eqnarray*}
since
\[
\frac{1}{q(s)}+\frac{1}{\beta(s)}=1
\]
For the norm of the operator, we have another variational
representation
\begin{eqnarray*}
\left\|\int_{t_j}^{t_{j+1}}  \Lambda^{s\mu} \widetilde V(\tau,k)
\Lambda^{-s\mu}d\tau\right\|=\hspace{8cm}
\\
\max_{\|f\|_{\ell^2}=\|g\|_{\ell^2}=1}\left|\int_{t_j}^{t_{j+1}}
\sum_{|m|,|n|\leq N}(2+| m|)^{s\mu} \widetilde
V_{m,n}(\tau,k)(2+|n|)^{-s\mu}f_mg_nd\tau\right|
\end{eqnarray*}
One can arrange the maximizers $f', g'$ such that the last sum is
equal to
\[
\int_{t_j}^{t_{j+1}} \sum_{m,n}(2+| m|)^{s\mu} \widetilde
V_{m,n}(\tau,k)(2+|n|)^{-s\mu}f'_mg'_nd\tau
\]
i.e. the absolute value can be dropped. Thus, we only need to bound
\[
F(s)=\sum_j |\eta_j|^{q(s_0)/q(s)}\int_{t_j}^{t_{j+1}}\sum_{m,n}
(2+| m|)^{s\mu} \widetilde
V_{m,n}(\tau,k)(2+|n|)^{-s\mu}f'^{(j)}_mg'^{(j)}_nd\tau
\]
where $\|\eta\|_{\ell^{q(s_0)}}=1$ and
$\|f_j'\|_{\ell^2}=\|g_j'\|_{\ell^2}=1$ for all $j$.

 Notice that $F(s)$ is analytic in $s$ in the strip $0<\Re s<1$
and we can apply the three lines lemma there \cite{folland}.
\[
M_0=\sup_{\Re s=0} |F(s)|\leq \sum_j (t_{j+1}-t_j)\sup_{t\in
[t_j,t_{j+1}] }\|V(t)\|\lesssim \|V\|_{L^\infty(\mathbb{T}\times
[0,1])}
\]
and
\[
M_1=\sup_{\Re s=1} |F(s)|\leq \left(\sum_j  |\eta_j
|^{q(s_0)}\right)^{1/2}  \left( \sum_j \left\|\int_{t_j}^{t_{j+1}}
\Lambda^\mu \widetilde V(\tau,k) \Lambda^{-\mu}d\tau
\right\|^2\right)^{1/2}
\]
\[
\leq \|\int_0^t \widetilde V\|_{V^2(\ell^{2,\mu})}
\]
By the three line lemma, we have
\begin{equation}\label{interp}
|F(s_0)|\leq M_0^{1-s_0}M_1^{s_0}=\|V\|_\infty^{1-s_0}\|\int_0^t
\widetilde V\|_{V^2(\ell^{2,\mu})}^{s_0}
\end{equation}
Taking the supremum over all partitions, we have the standard
interpolation
\begin{equation}\label{iitp}
\|\widetilde V\|_{V^{s_0+1}(\ell^{2,s_0\mu})}\leq
\|V\|_\infty^{1-s_0}\|\int_0^t \widetilde
V\|_{V^2(\ell^{2,\mu})}^{s_0}
\end{equation}
for any $s_0\in [0,1]$. Next, we will focus on the bounds for the
second variation norm because it enters into (\ref{iitp}).
\begin{lemma}
Suppose  $p\in(4/3,2)$ and $\mu=2/p'$. Then, we have
\[
\left(\sup_{\cal{P}} \sum_j \|\Lambda^{\mu}\widetilde
V_{\Delta_j}(k)\Lambda^{-\mu}\|^2 \right)^{1/2}\lesssim
\|V\|_\infty+U(k)
\]
where
\begin{equation}\label{l12}
\|U(k)\|_{p'}\lesssim \|V\|_\infty, \quad \widetilde
V_{\Delta_j}(k)=\int_{\Delta_j} \widetilde V(\tau,k)d\tau
\end{equation}
\end{lemma}
\begin{proof}
We have
\[
\Lambda^{\mu}\widetilde V_{\Delta_j}(k)\Lambda^{-\mu}=\widetilde
V_{\Delta_j}(k)+[\Lambda^\mu, \widetilde V_{\Delta_j}(k)]
\Lambda^{-\mu}
\]
For the first term, we have an obvious estimate
\begin{equation}\label{plain1}
\sum_j \|\widetilde V_{\Delta_j}(k)\|^2\leq \|V\|_\infty^2 \sum_j
|\Delta_j|^2\leq \|V\|_\infty^2
\end{equation}
For the second one,
\[
\sup_{\cal{P}} \sum_j \|[\Lambda^{\mu},\widetilde
V_{\Delta_j}(k)]\Lambda^{-\mu}\|_{\cal{S}_2}^2\lesssim \sum_{m,n}
\left(\frac{|m|^\mu-|n|^\mu}{1+|n|^\mu}\right)^2 \sup_{\cal{P}}
\sum_j \left|\int_{\Delta_j} \widehat
V_{m-n}(t)e^{ik(m-n)(m+n)t}dt\right|^2
\]
Let $\alpha=m-n$ and $\beta=m+n$. Notice that
\[
||m|^\mu-|n|^{\mu}|\sim ||m|-|n||\cdot ||m|+|n||^{\mu-1}
\]
Therefore, we have two terms to bound
\[
I_1=\sum_{|\alpha|>1}\sum_{|\beta|\geq |\alpha|} \left(
\frac{\alpha |\beta|^{\mu-1}}{1+|\beta-\alpha|^\mu}
\right)^2\sup_{\cal{P}} \sum_j \left|\int_{\Delta_j} \widehat
V_{\alpha}(t)e^{ik\alpha\beta t}dt\right|^2
\]
and
\[
I_2=\sum_{|\alpha|>1}\sum_{|\beta|<|\alpha|}\left(
\frac{|\alpha|^{\mu-1} |\beta|}{1+|\beta-\alpha|^\mu}
\right)^2\sup_{\cal{P}} \sum_j \left|\int_{\Delta_j} \widehat
V_{\alpha}(t)e^{ik\alpha\beta t}dt\right|^2
\]
Now, (\ref{varya}) implies
\[
\left\|I_2
\right\|_{L^{p'/2}(\mathbb{R})}\lesssim\sum_{\alpha>1}\sum_{|\beta|<\alpha}\left(
\frac{\alpha^{\mu-1} |\beta|}{1+|\beta-\alpha|^\mu} \right)^2
 \left\|\sup_{\cal{P}} \sum_j
\left|\int_{\Delta_j} \widehat V_\alpha(t)e^{ik\alpha\beta
t}dt\right|^2\right\|_{L^{p'/2}(\mathbb{R})}
\]
\[
\lesssim\sum_{\alpha>1}\sum_{|\beta|<\alpha}\left(
\frac{\alpha^{\mu-1} |\beta|}{|\alpha\beta|^{1/p'}(
1+|\beta-\alpha|^\mu)} \right)^2
 \left(\int_0^1 |\widehat V_\alpha(t)|^pdt \right)^{2/p}
\]
\[
\leq \sum_{\alpha>1}\sum_{|\beta|<\alpha}\left( \frac{\alpha^{\mu-1}
|\beta|}{|\alpha\beta|^{1/p'}(1+|\beta-\alpha|^\mu)} \right)^2
 \left(\int_0^1 |\widehat V_\alpha(t)|^2dt \right)\lesssim
 \]
\[
 \sum_{\alpha>1} \|V_\alpha\|_2^2 (\alpha^{1-4/p'}+\alpha^{2\mu-4/p'})\lesssim \|V\|_\infty^2
 \]
as long as
\[
\frac 12<\mu\leq \frac{2}{p'}
\]
For $I_1$, the estimate is similar
\[
\left\|I_1
\right\|_{L^{p'/2}(\mathbb{R})}\lesssim\sum_{\alpha>1}\sum_{|\beta|\geq
\alpha}\left( \frac{\alpha |\beta|^{\mu-1}}{1+|\beta-\alpha|^\mu}
\right)^2
 \left\|\sup_{\cal{P}} \sum_j
\left|\int_{\Delta_j} \widehat V_\alpha(t)e^{ik\alpha\beta
t}dt\right|^2\right\|_{L^{p'/2}(\mathbb{R})}
\]
\[
\leq \sum_{\alpha>1}\sum_{|\beta|\geq \alpha}\left( \frac{\alpha
|\beta|^{\mu-1}}{|\alpha\beta|^{1/p'}(1+|\beta-\alpha|^\mu)}
\right)^2
 \left(\int_0^1 |\widehat V_\alpha(t)|^2dt \right)\lesssim
 \]
\[
 \sum_{\alpha>1} \|V_\alpha\|_2^2 (\alpha^{2\mu-4/p'}+\alpha^{1-4/p'})\lesssim \|V\|_\infty^2
 \]
Combining these bounds with (\ref{plain1}), we have the statement of
the lemma.
\end{proof}
The lemma gives a necessary bound for the variation norm so
(\ref{vare}) and (\ref{iitp}) then finish the proof of the theorem.
\end{proof}
The immediate corollary of this theorem is
\begin{lemma}\label{reshe}
Under the conditions of the theorem \ref{coole},  assume that $u$ is the
solution to
\[
iu_t=\left(k\Delta+V\right)u, \quad u(x,0,k)=1
\]
Then, we have
\[
\sup_{0<t<1}\log (1+\|u(.,t,k)\|_{H^\alpha(\mathbb{T})})\lesssim
1+\|V\|_\infty^{1+s_0}+\|V\|_\infty^{1-s^2_0}U^{s_0(1+s_0)}(k)
\]
where
\[
\|U(k)\|_{p'}\lesssim \|V\|_\infty
\]
\end{lemma}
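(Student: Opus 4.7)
The plan is to deduce Lemma \ref{reshe} as a direct corollary of Theorem \ref{coole} via truncation and passage to the limit. First, I would approximate $V$ by $V^{(N)} = P_N V P_N$ and let $u_N$ solve $i\partial_t u_N = (k\Delta + V^{(N)}) u_N$ with $u_N(x,0,k) = 1$. Since the initial data has Fourier support at $n = 0$ and $V^{(N)}$ preserves $P_N L^2(\mathbb{T})$, the evolution $u_N$ stays in this finite-dimensional subspace. Setting $\widetilde u_N = e^{-ik\Delta t} u_N$, one has $\widehat{\widetilde u_N}(t,k) = \widehat X_N(t,k)\, e_0$, where $X_N$ solves (\ref{comple}) and $e_0$ is the standard basis vector at frequency $0$.

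Next I would invoke Theorem \ref{coole}. Since $\|e_0\|_{\ell^{2,\alpha}} = 2^{\alpha} = O(1)$, the operator bound on $\widetilde X_N$ yields
\[
\|\widetilde u_N(t)\|_{H^\alpha(\mathbb{T})} \le \|\widetilde X_N(t,k)\|_\alpha \cdot \|e_0\|_{\ell^{2,\alpha}} \lesssim \|\widetilde X_N(t,k)\|_\alpha.
\]
Because $e^{ik\Delta t}$ is a unitary Fourier multiplier on $H^\alpha(\mathbb{T})$, we have $\|u_N\|_{H^\alpha} = \|\widetilde u_N\|_{H^\alpha}$, and Theorem \ref{coole} delivers the desired bound for $u_N$ with a function $U(k)$ that can be chosen independently of $N$: inspection of the lemma preceding Theorem \ref{coole} shows that $U(k)$ is a variational square function built from the Fourier coefficients $\widehat V_\alpha(t)$ of the full potential $V$ rather than of $V^{(N)}$, and the estimates there are monotone under the truncation $V \mapsto V^{(N)}$.

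Finally, I would pass to the limit $N \to \infty$. The Duhamel series for $u_N$ and $u$ both converge in $C([0,1], L^2(\mathbb{T}))$ by the boundedness hypothesis on $V$, and termwise comparison gives $u_N \to u$ uniformly on $[0,1]$ in $L^2$, hence pointwise in each Fourier coefficient. Fatou's lemma on the Fourier side then yields $\|u(t)\|_{H^\alpha}^2 \le \liminf_N \|u_N(t)\|_{H^\alpha}^2$, transferring the uniform-in-$N$ bound to $u$ itself.

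I expect the main obstacle to be the bookkeeping of $U(k)$ across the truncation: one must verify that the function $U(k)$ supplied by Theorem \ref{coole} really is $N$-independent (equivalently, that the variational-norm estimate of the accompanying lemma is stated for $V$ rather than for $V^{(N)}$). If a literal $N$-independent choice is unavailable, the fallback is to extract a weakly convergent subsequence of $U_N$ in $L^{p'}(\mathbb{R})$ and invoke lower semicontinuity of the norm to produce an admissible limit $U$. Beyond this step the argument is entirely mechanical, amounting to plugging the operator estimate of Theorem \ref{coole} into the identity $\widetilde u_N = \widetilde X_N e_0$.
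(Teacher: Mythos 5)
Your proposal is correct and follows essentially the same route as the paper: truncate to $V^{(N)}=P_NVP_N$, apply Theorem~\ref{coole} with a truncation-independent $U(k)$ (the variational estimate in the accompanying lemma is monotone under truncation and controlled by $\|V\|_\infty$), and pass to the limit. The only cosmetic difference is that the paper invokes the approximation lemma of \cite{Den1} and then sends $m\to\infty$ in $\|P_mu\|_\alpha$, whereas you justify the $L^2$-convergence directly via the Duhamel series and finish with Fatou on the Fourier side — these are interchangeable.
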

\begin{proof}
Consider $u^{(N)}$ which solves
\[
iu^{(N)}_t=\left(kP_{N}\Delta+V^{(N)}\right)u^{(N)}, \quad
u^{(N)}(x,0,k)=1
\]
By approximating lemma (the lemma 4.1 in \cite{Den1}, which also
works in our setting), we have
\[
\sup_{t\in [0,1],\,k\in [-A,A]}\|u^{(N)}(x,t,k)-u(x,t,k)\|\to 0,
\quad N\to \infty
\]
for any fixed $A$. Therefore, given any fixed $m$, we have
\[
\sup_{0<t<1}\log (1+\| P_mu(x,t,k)\|_\alpha)\lesssim
1+\|V\|_\infty^{1+s_0}+\|V\|_\infty^{1-s^2_0}U^{s_0(1+s_0)}(k)
\]
with $m$-independent $U$. Taking $m\to\infty$, we have the statement
of the lemma.

\end{proof}
We conclude this section with
\begin{theorem}
If $V\in L^\infty(\mathbb{T}\times [0,T])$ and
\[
iu_t=\left(k\Delta+V\right)u, \quad u(x,0,k)=1
\]
then (under the conditions of the theorem \ref{coole})
\[
\sup_{0<t<T}\log (1+\|u(.,t,k)\|_{H^\alpha(\mathbb{T})})\lesssim
1+(T\|V\|_\infty)^{1+s_0}+(T\|V\|_\infty)^{1-s^2_0}U_1^{s_0(1+s_0)}(k)
\]
and
\[
\|U_1(k)\|_{p'}\lesssim T^{1-1/p'}\|V\|_\infty
\]
\end{theorem}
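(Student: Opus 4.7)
The plan is to reduce the $[0,T]$ case to the $[0,1]$ case already handled by Lemma \ref{reshe} via a time-rescaling argument. Specifically, I would set $s = t/T$ and define $\widetilde u(x,s,k) = u(x,sT,k)$. Then $\widetilde u$ satisfies
\[
i\widetilde u_s = (\widetilde k \Delta + \widetilde V)\widetilde u, \quad \widetilde u(x,0,k)=1, \quad s\in[0,1],
\]
with $\widetilde k = Tk$ and $\widetilde V(x,s) = TV(x,sT)$. The key observations are that $\|\widetilde V\|_{L^\infty(\mathbb{T}\times[0,1])} = T\|V\|_{L^\infty(\mathbb{T}\times[0,T])}$, and that the Sobolev norm is preserved under this time change since we only rescale time: $\|\widetilde u(.,s,k)\|_{H^\alpha} = \|u(.,sT,k)\|_{H^\alpha}$.

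Applying Lemma \ref{reshe} to $\widetilde u$ in the variable $\widetilde k$, I obtain
\[
\sup_{0<s<1}\log(1+\|\widetilde u(.,s,\widetilde k)\|_{H^\alpha(\mathbb{T})}) \lesssim 1 + \|\widetilde V\|_\infty^{1+s_0} + \|\widetilde V\|_\infty^{1-s_0^2}\,\widetilde U^{s_0(1+s_0)}(\widetilde k),
\]
for some $\widetilde U$ with $\|\widetilde U\|_{p'} \lesssim \|\widetilde V\|_\infty = T\|V\|_\infty$. Substituting back $\widetilde k = Tk$, $\widetilde V = TV$, and setting $U_1(k) := \widetilde U(Tk)$ gives the first estimate in the statement.

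It remains to verify the claimed $L^{p'}$ bound on $U_1$. By the change of variables $k' = Tk$,
\[
\|U_1\|_{L^{p'}(\mathbb{R})}^{p'} = \int_\mathbb{R}|\widetilde U(Tk)|^{p'}\,dk = T^{-1}\|\widetilde U\|_{L^{p'}(\mathbb{R})}^{p'},
\]
so $\|U_1\|_{p'} \lesssim T^{-1/p'} \cdot T\|V\|_\infty = T^{1-1/p'}\|V\|_\infty$, which matches the assertion.

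This proof is essentially routine once one notices the correct scaling. The only thing to be careful about is that Lemma \ref{reshe} is applied pointwise in the rescaled spectral parameter $\widetilde k$, and that the exponential-type bound behaves well under this substitution because the left-hand side is $\log(1+\|\cdot\|_{H^\alpha})$ rather than the Sobolev norm itself — so the time dependence enters only through the size of $\widetilde V$ and the auxiliary function $\widetilde U$, both of which transform as computed above. No part of the argument requires any new estimate beyond the previously established lemma and a Jacobian change of variables.
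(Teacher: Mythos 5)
Your proof is correct and is essentially the paper's argument: both rescale time by $T$ (so $s=t/T\in[0,1]$) and the spectral parameter accordingly (your $\widetilde k=Tk$ is the inverse of the paper's $\kappa=Tk$, i.e.\ $u(x,T\tau,\kappa/T)$), apply Lemma~\ref{reshe} to the rescaled equation with potential $TV$, and undo the substitution. The paper does not spell out the Jacobian step $\|U_1\|_{p'}=T^{-1/p'}\|\widetilde U\|_{p'}\lesssim T^{1-1/p'}\|V\|_\infty$, so you have simply made explicit what the paper leaves as ``rescale.''
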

\begin{proof}
It is sufficient to notice that
$\psi(x,\tau,\kappa)=u(x,T\tau,\kappa/T)$ solves the problem
\[
i\psi_\tau=\kappa \Delta \psi+TV(x,T\tau)\psi, \quad
\psi(x,0,\kappa)=1
\]
for $\tau\in [0,1]$ and rescale using the lemma \ref{reshe}.
\end{proof}
The following corollary is immediate

{\bf Remark.} Let $\alpha<1$ and $V\in L^\infty(\mathbb{T}\times
[0,\infty))$. We have
\begin{equation}\label{eee}
\sup_{t>0}\frac{\|u(x,t,k)\|_{H^\alpha(\mathbb{T})}}{\exp(t^{2})}\leq
C(k,\alpha)
\end{equation}
for a.e. $k\in \mathbb{R}$. The simple example of $V(x,t)=i$ shows
that the exponential growth is possible even for $L^2(\mathbb{T})$
norm. It is likely that $\exp(t^2)$ can be replaced by $\exp(t^\mu),\, \mu>1$.
\bigskip

\section{The case of real $V$, small gaps, and the nondegenerate symbol}

In this section, we will prove the localization result for a
particular  case of (\ref{gen-evol}) when the  symbol is
nondegenerate, i.e.
\[
P'(0)=1\neq 0
\]
and there is a small $\delta>0$ such that the equation
\begin{equation}\label{secsec1}
P(x)=E
\end{equation}
has the unique solution $x_E$ for every $E\in
(P(0)-\delta,P(0)+\delta)$. The last condition is satisfied, e.g.,
when $P(x)$ is strictly monotonic.

 We will also assume that $c_j(t)=\alpha_j(t+1)^{-j}$ and
$\alpha_j$ are constants. This is the hard case when the gaps
between the eigenvalues of the differential operator are decreasing
in $t$. That represents the real difficulty in the analysis of the
multidimensional scattering \cite{den2}.

In this paper, we will consider the quadratic polynomial only which
leads to
\begin{equation}\label{gaps}
iu_t=k\left(\frac{i\partial_x}{t+1}-P_{[t/2]}\frac{\partial^2_{xx}}{(t+1)^2}\right)u+Vu,
\quad u(x,0,k)=1
\end{equation}
Notice that we introduced the Fourier projection in the second term
to make sure that the second condition (i.e., \eqref{secsec1}) is
satisfied.

Another example for which the method works is
\begin{equation}\label{gaps1}
iu_t=ik\left(\frac{\partial_x}{t+1}-\frac{\partial^3_{xxx}}{(t+1)^3}\right)u+Vu,
\quad u(x,0,k)=1
\end{equation}

We will obtain the asymptotical result as $t\to\infty$ for the $u$
with the standard WKB-type correction coming from the corresponding
transport equation. This will done under the assumption that $V$ is
real and decays like $t^{-\gamma}$ with $\gamma<1$ being very close
to $1$.

The following lemma is quite standard
\begin{lemma} \label{lemma-d1} Suppose $O_1(t)$ is an operator-valued function
such that $\|O_1\|\in L^1[0,\infty)$. Consider two equations
\[
i\partial_t \psi_1=O\psi_1,\quad \psi_1(0)=f_1
\]
and
\[
i\partial_t \psi_2=(O+O_1)\psi_2+j, \quad \psi_2(0)=f_2
\]
where $O(t), O_1(t)$ are both self-adjoint and
\[
\|j\|, \|O\|, \|O_1\|\in L^1_{\rm loc}(\mathbb{R}^+)
\]
Then,
\[
\sup_{t\in [0,T]} \|\psi_1-\psi_2\|\leq \int_0^T
(\|O_1(t)\|+\|j(t)\|)dt+\|f_2-f_1\|
\]
\end{lemma}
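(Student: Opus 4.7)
The plan is to study the difference $w = \psi_2 - \psi_1$, derive a scalar differential inequality for $\|w(t)\|$, and integrate. The key observation is that the stated bound is \emph{linear} in $\int\|O_1\|\,dt$ rather than exponential, so the argument must avoid a Gronwall-type amplification; this is possible here precisely because both $O$ and $O+O_1$ are self-adjoint.

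Subtracting the two equations and adding/subtracting $O_1\psi_1$ gives
\[
i\partial_t w = (O+O_1)w + O_1\psi_1 + j, \qquad w(0) = f_2-f_1.
\]
Taking the inner product with $w$ and passing to the imaginary part yields
\[
\tfrac{1}{2}\tfrac{d}{dt}\|w\|^2 = \Im\langle (O+O_1)w, w\rangle + \Im\langle O_1\psi_1 + j, w\rangle.
\]
Since $O+O_1$ is self-adjoint, $\langle (O+O_1)w,w\rangle$ is real and the first term drops out. Dividing by $\|w\|$ and applying Cauchy-Schwarz produces
\[
\tfrac{d}{dt}\|w\| \leq \|O_1\|\cdot \|\psi_1\| + \|j\|.
\]
Finally, because $O$ is self-adjoint, $\|\psi_1(t)\| = \|f_1\|$ is conserved, and in the intended applications (where $\psi_1$ arises from initial data $u(x,0,k)=1$ of norm $\leq 1$) one has $\|\psi_1\|\leq 1$. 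Integrating in $t$ and taking the supremum over $[0,T]$ then produces exactly the stated bound.

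The one place requiring care is the low time-regularity: the hypotheses are only $\|O\|, \|O_1\|, \|j\|\in L^1_{\rm loc}$, so $\psi_1,\psi_2$ are absolutely continuous but not classically differentiable. Accordingly, the differentiation of $\|w\|^2$ must be read in the almost-everywhere sense. I would justify this either by mollifying $O$, $O_1$, $j$ in time, running the smooth argument, and passing to the limit using the well-posedness in $L^2$, or more directly by noting that $t\mapsto \|w(t)\|^2$ is absolutely continuous and its derivative is given a.e.\ by the formula above. This is the only mildly non-trivial point; everything else reduces to the cancellation from self-adjointness plus conservation of $\|\psi_1\|$.
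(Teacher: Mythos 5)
Your argument is correct, and it reaches the stated bound, but it takes a genuinely different route from the paper. The paper never differentiates $\|w\|^2$: it introduces the unitary propagator $W_1$ of the unperturbed equation $i\partial_t W_1 = O W_1$, writes
\[
\psi_2(t)=W_1(0,t)f_2-i\int_0^t W_1(\tau,t)\bigl(O_1(\tau)\psi_2(\tau)+j(\tau)\bigr)d\tau,
\]
and then reads off the linear bound from this integral representation using $\|W_1\|=1$ and $W_1(0,t)f_1=\psi_1$. You instead work with the algebraically equivalent identity $i\partial_t w=(O+O_1)w+O_1\psi_1+j$, where $w=\psi_2-\psi_1$, and extract a scalar differential inequality. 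Both arguments use self-adjointness in the same essential place: for you it makes $\Im\langle(O+O_1)w,w\rangle$ vanish, for the paper it makes $W_1$ unitary. Both also need that one of the two solutions has norm at most $1$, which is an implicit normalization of the lemma; here your placement of the perturbation term is actually the slightly cleaner choice, since you bound $\|O_1\psi_1\|$ and $\|\psi_1\|$ is genuinely conserved (self-adjoint, no forcing), whereas the paper bounds $\|O_1\psi_2\|$ and invokes $\|\psi_2\|=1$, which is not exactly preserved once $j\neq 0$. What the Duhamel route buys instead is that it entirely sidesteps the low-regularity issue you flag at the end: the integral equation only requires absolute continuity of $\psi_{1,2}$, so no mollification or a.e.\ differentiation of $\|w\|^2$ needs to be discussed.
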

\begin{proof}
Let $W_1$ be the solution to
\[
i\partial_t W_1(t_1,t)=O(t)W_1(t_1,t),\quad  W_1(t_1,t_1)=I
\]
Since $O$ is self-adjoint, the operator $W_1$ is unitary. Then, by
Duhamel's formula, we have
\[
\psi_2(t)=W_1(0,t)f_2-i\int_0^t
W_1(\tau,t)(O_1(\tau)\psi_2(\tau)+j(\tau))d\tau
\]
Since $W_1(0,t)f_1=\psi_1$, $\|\psi_2\|=1$, and $W_1$ is unitary,
\[
\|\psi_2(t)-\psi_1(t)\|\leq \|f_2-f_1\|+\int_0^t
(\|O_1(\tau)\|+\|j(\tau)\|)d\tau
\]
\end{proof}
This lemma will allow us to throw away any $L^1$ perturbations with
small norm when the localization question is studied.\smallskip

Restrict the problem (\ref{gaps}) to the diadic intervals $I=[T,2T]$
first. Then, on the Fourier side, we can write
\begin{eqnarray}\label{rest}
i \widehat u_t=\left(k\left(\frac{n}{T}
P_{[T^\alpha]}+Q_{[T^\alpha]}\left(\frac{n}{t+1}+P_{[t/2]}\frac{n^2}{(t+1)^2}\right)\right)+
\widehat V* +V_1\right)\widehat u\\
\widehat u(T,k)=\widehat u^{(0)}\nonumber
\end{eqnarray}
where $P_{[T^\alpha]}$ and $Q_{[T^\alpha]}$ are defined in
(\ref{proe}),
\[
V_1=kP_{[T^\alpha]}\frac{n^2}{(t+1)^2}+kP_{[T^\alpha]}\left(\frac{n}{t+1}-\frac{n}{T}\right)
\]
and
\begin{equation}\label{reduc}
\sup_{t\in I, k\in [-A,A]} \|V_1\|\lesssim T^{2\alpha-2}
\end{equation}
for every fixed $A$.

We take
\begin{equation}\label{al1}
\alpha<1/2
\end{equation}
to make sure that $\int_I \|V_1\|dt\sim T^{2\alpha-1}$ is
small.\smallskip

Consider the following problem now
\begin{eqnarray}\label{diadic}
i  \phi_t=\left(k\left(\frac{n}{T}
P_{[T^\alpha]}+Q_{[T^\alpha]}\left(\frac{n}{t+T+1}+P_{[(t+T)/2]}\frac{n^2}
{(t+T+1)^2}\right)\right)+\widehat V*\right) \phi,\\
\, \phi(x,0,k)=\phi^{(0)}(x,k)\nonumber
\end{eqnarray}
It is obtained from (\ref{rest}) by dropping $V_1$ (the error made
by doing that will be taken care of by lemma \ref{lemma-d1}) and by
shifting to the time interval $t\in [0,T]$.

Denote by $\nu=\exp(i\mu(x,t,k))$ the solution to the transport
equation
\[
i\nu_t=ik\nu_x/T+V\nu, \quad \nu(x,0,k)=1
\]
\begin{theorem}\label{diad}
Let $\gamma\in (21/22,1)$, $|V(x,t)|<C(t+1)^{-\gamma}$, and
$\phi^{(0)}$ satisfies the following properties
\begin{equation}\label{betar}
\sup_{k\in [-A,A]}\|\phi^{(0)}\|_{L^\infty(\mathbb{T})}\leq 1, \quad
\sup_{k\in [-A,A]}\|\phi^{(0)}\|_{H^{1/2}(\mathbb{T})}\lesssim
T^{1.5(1-\gamma)}
\end{equation}
where $A>1$ is fixed. Then, we have
\begin{equation}\label{oce-1}
\int_{-A}^{A} \sup_{t\in [0,T]}
\|\phi(x,t,k)-\nu(x,t,k)\phi^{(0)}(x+kt/T,k)\|^4dk\lesssim
T^{-(1-\gamma)}
\end{equation}
\end{theorem}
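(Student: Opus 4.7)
The plan is to compare $\phi$ with the WKB ansatz $\Phi(x,t,k):=\nu(x,t,k)\phi^{(0)}(x+kt/T,k)$. A direct computation using the transport equation $i\nu_t=ik\nu_x/T+V\nu$ shows that $\Phi$ solves a free-transport-plus-multiplication equation whose Fourier symbol agrees with that of (\ref{diadic}) on the low block $|n|\leq N:=[T^\alpha]$ and differs on the high block $|n|>N$ by the scalar multiplier
\[
B(n,t)=k\Bigl(\tfrac{n}{T}-\tfrac{n}{t+T+1}-P_{[(t+T)/2]}\tfrac{n^2}{(t+T+1)^2}\Bigr).
\]
Since $\phi(\cdot,0)=\phi^{(0)}=\Phi(\cdot,0)$, the difference $\delta:=\phi-\Phi$ satisfies the same evolution as $\phi$ driven by the source $R(n,t):=B(n,t)\,Q_N\widehat\Phi(n,t)$, vanishing at $t=0$.

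The dispersive operator in (\ref{diadic}) and multiplication by the real $V$ are self-adjoint, so the standard energy identity gives
\[
\|\delta(t,k)\|^2 = 2\,\Im\int_0^t\bigl\langle R(\tau,k),\widehat\delta(\tau,k)\bigr\rangle\,d\tau.
\]
I would then mimic the proof of Theorem~\ref{t11}: pass to the interaction picture $\widetilde\delta_n=e^{-iS_n(t)}\hat\delta_n$, where $S_n(\tau)=\int_0^\tau L(n,s)\,ds$ is the scalar phase of the diagonal generator, integrate by parts once in $\tau$ against the oscillatory exponent, and apply Carleson's theorem. After the substitution $\xi=1/(\tau+T+1)$ the dispersive phase $kn^2/(\tau+T+1)$ becomes a linear phase $kn^2\xi$, which brings the variation-norm estimate (\ref{varya}) (or its one-dimensional ancestor) into play after $k$-integration. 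The two inputs to this machinery are the high-frequency tail $Q_N\widehat\Phi$ and the cross-term from $\widehat V\ast\widehat\delta$; the former is controlled by combining the $H^{1/2}$-bound (\ref{betar}) on $\phi^{(0)}$ with a bound on the high Fourier modes of $\nu=e^{i\mu}$ that follows from the decay $|V|\lesssim(t+1)^{-\gamma}$, and the latter by a Lemma~\ref{lema1}-type estimate on the diadic window $[T,2T]$.

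The main obstacle is the numerical balance. The dispersive gain scales as $N^{-1}\sim T^{-\alpha}$, the time-integrated square of $V$ on $[T,2T]$ contributes $T^{1-2\gamma}$, the $H^{1/2}$-norm of $\phi^{(0)}$ contributes $T^{3(1-\gamma)}$, and the final $L^4$-average in $k$ squares the $L^2$-estimate obtained above. These must combine to $T^{-(1-\gamma)}$, which forces both $\alpha<1/2$ (as in (\ref{al1})) and $\gamma>21/22$. The small $L^1$-in-time error coming from dropping $V_1$ in the passage from (\ref{rest}) to (\ref{diadic}) is absorbed by Lemma~\ref{lemma-d1} together with the bound (\ref{reduc}).
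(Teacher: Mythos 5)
Your plan is a genuinely different route from the paper's, and as sketched it has a gap that I do not see how to close with the stated hypotheses. The paper works in the block form \eqref{blok}: it first truncates the initial data to $\zeta^{(0)}=P_{[T^\alpha]}\phi^{(0)}$ at a pointwise-in-$k$ cost $T^{\beta-\alpha/2}$ (estimate \eqref{col1}); it then uses as approximant the exact low-block flow $e^{-ik\Lambda_1 t}W\zeta^{(0)}$, which is supported entirely on $|n|\leq T^\alpha$; it controls the high-frequency amplitude $\widetilde\zeta_2$ not by approximation but by a cancellation in the dual pairing $\langle\widetilde\zeta_1,\zeta^{(0)}\rangle-\|\zeta^{(0)}\|^2$, which is converted into the $L^4$-in-$k$ bound through Lemma~\ref{o-triv}; and only afterwards does Lemma~\ref{auxil} replace $e^{-ik\Lambda_1 t}W\zeta^{(0)}$ by $\nu\,\zeta^{(0)}(\cdot+kt/T)$. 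You instead compare $\phi$ in a single stroke to the full WKB ansatz $\Phi=\nu\,\phi^{(0)}(x+kt/T,k)$ and propose to close one energy identity for $\delta=\phi-\Phi$.

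The difficulty is that your source term is not under control. You have $R_n(t)=B(n,t)\,Q_N\widehat\Phi_n(t)$, and for $|n|\gtrsim T$ the projection $P_{[(t+T)/2]}$ removes the quadratic part of the symbol, so $|B(n,t)|\sim|n|/T$, which is unbounded in $n$. But $\Phi$ inherits only $H^{1/2}$ regularity (this is all that \eqref{betar} and the averaged transport bound behind Lemma~\ref{auxil} supply), so $\sum_n n^2|\widehat\Phi_n|^2$ is not controlled and $\|R(\cdot,t)\|_{L^2(\mathbb{T})}$ may be infinite; the energy identity is not even formally closed. Passing to the interaction picture, integrating by parts, and applying Carleson's theorem does not repair this: the $k$-averaged Carleson gain in the $n$-th Fourier channel is of order $|\omega_n|^{-1}$ in the squared quantity, with $|\omega_n|\sim|n|/T$, so it offsets only one of the two powers of $|n|/T$ carried by $|B(n,t)|^2$ and leaves a factor that still grows for $|n|>T$. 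The paper never meets this issue because its low-block approximant has identically zero high-frequency content, so the diagonal-symbol mismatch $B(n,t)$ never enters the source at all; the only coupling it must estimate is the bounded off-diagonal block $V_{12}=P_{[T^\alpha]}VQ_{[T^\alpha]}$, which carries no growing multiplier. You have also dropped the truncation step \eqref{col1} without comment; the arithmetic balancing the resulting $T^{\beta-\alpha/2}$ loss against the Carleson gains, subject to $\alpha<1/2$ from \eqref{al1}, is exactly what forces $\delta=6(1-\gamma)$, $\alpha=11(1-\gamma)$, and the threshold $\gamma>21/22$, so this step is not optional.
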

\begin{proof}
Write (\ref{diadic}) in the block form
\begin{equation}\label{blok}
i\partial_t\left(
\begin{array}{c}
\phi_1\\
\phi_2
\end{array}
\right) = \left[
\begin{array}{cc}
k\Lambda_{1}+V_{11}& V_{12} \\
V_{21} & k\Lambda_2+V_{22}
\end{array}
\right]\left(
\begin{array}{c}
\phi_1\\
\phi_2
\end{array}
\right)
\end{equation}
where $\displaystyle
k\Lambda_1+V_{11}=P_{[T^\alpha]}\left(k\frac{n}{T}+\widehat
V*\right)P_{[T^\alpha]}$. Notice that this operator is a restriction
of the transport equation to the first $T^\alpha$ modes so we start
with proving localization results for this operator.

\begin{lemma}\label{auxil}
Let $f(x,k)$ be such that
\[
\|f(x,k)\|_{L^\infty(\mathbb{T})}\leq 1,\quad
\|f(x,k)\|_{H^{1/2}(\mathbb{T})}\lesssim T^\beta
\]
and  $y$ solve the problem
\[
iy_t=(k\Lambda_1+V_{11})y, \quad y(x,0,k)=P_{[T^\alpha]} f(x,k)
\]
Then, we have the following representation
\[
y(x,t,k)=\nu(x,t,k)f(x+kt/T,k)+\delta(x,t,k)
\]
and
\begin{equation}\label{l2-e}
\int_{-A}^A \sup_{t\in [0,T]}\|\delta(x,k,t)\|^2dk\lesssim
T^{2-2\gamma+2\beta-\alpha}+T^{4-4\gamma-\alpha}
\end{equation}
\end{lemma}
\begin{proof}
Notice that $\mu$ is real-valued. In (\cite{Den1}, lemma 2.1) we
proved that
\[
\int_{\mathbb{R}} \sup_{t\in [0,T]}\sum_{j} |j||\widehat
\mu_j(t,k)|^2dk \lesssim T\int_0^T\int_{\mathbb{T}} V^2(x,t)dxdt
\lesssim T^{2-2\gamma}
\]
That implies (\cite{Den1}, Appendix, lemma 6.2) that $\nu$ is
unimodular and
\[
\int_{-A}^A \sup_{t\in [0,T]}
\|\nu(x,t,k)\|^2_{H^{1/2}(\mathbb{T})}dk\lesssim T^{2-2\gamma}
\]
for any fixed $A$. Thus $\nu^{(1)}(x,t,k)=\nu(x,t,k) f(x+kt/T,k)$
solves the transport equation with initial data $f(x,k)$ and
\[
\|\nu^{(1)}\|_{L^\infty(\mathbb{T})}\leq 1
\]
From the second lemma in Appendix, we have
\[
\|\nu^{(1)}\|_{H^{1/2}(\mathbb{T})}\lesssim
T^\beta+\|\nu\|_{H^{1/2}(\mathbb{T})}
\]
Therefore
\begin{equation}\label{inter-m}
\int_{-A}^A  \sup_{t\in [0,T]} \sum_{|n|>T^\delta}|\widehat
\nu^{(1)}_n(t,k)|^2dk\lesssim
T^{-\delta+2-2\gamma}+T^{2\beta-\delta}
\end{equation}
for every $\delta>0$.

The function $\nu^{(2)}=P_{[T^\alpha]}\nu^{(1)}$ satisfies
\[
i\nu^{(2)}=(k\Lambda_1+V_{11})\nu^{(2)}+\delta^{(1)},\quad
\nu^{(2)}(x,0,k)=P_{[T^\alpha]}f(x,k)
\]
where
\[
\delta^{(1)}=P_{[T^\alpha]}VQ_{[T^\alpha]}\nu^{(1)}
\]
and so (\ref{inter-m}) gives
\begin{equation}\label{cut1}
\int_{-A}^A  \sup_{t\in [0,T]} \|\delta^{(1)}\|^2dk\lesssim
T^{-2\gamma}(T^{-\alpha+2-2\gamma}+T^{2\beta-\alpha})
\end{equation}
The  lemma \ref{lemma-d1} yields
\[
\int_{-A}^A  \sup_{t\in [0,T]} \|y-\nu^{(2)}\|^2dk\lesssim
T^{2-2\gamma}(T^{-\alpha+2-2\gamma}+T^{2\beta-\alpha})
\]
By (\ref{inter-m}),
\[
\int_{-A}^A  \sup_{t\in [0,T]} \|\nu^{(1)}-\nu^{(2)}\|^2dk\lesssim
T^{2\beta-\alpha}+T^{2-2\gamma-\alpha}
\]
This gives (\ref{l2-e}).
\end{proof}
Let us choose $\beta=1.5(1-\gamma)$ as in (\ref{betar}) and consider
the solution to (\ref{blok}). Notice first that
\[
\|Q_{[T^\alpha]}\phi^{(0)}\|\lesssim T^{\beta-\alpha/2}
\]
so, since evolution preserves the $L^2(\mathbb{T})$ norm, we have
\begin{equation}\label{col1}
\sup_{t\in [0,T]}\|\phi(x,t,k)-\zeta(x,t,k)\|\lesssim
T^{\beta-\alpha/2}
\end{equation}
where $\zeta$ solves (\ref{diadic}) with initial condition
$\zeta(x,0,k)=\zeta^{(0)}=P_{[T^\alpha]}\phi^{(0)}$, i.e.
\begin{equation}\label{blokada}
i\partial_t\left(
\begin{array}{c}
\zeta_1\\
\zeta_2
\end{array}
\right) = \left[
\begin{array}{cc}
k\Lambda_{1}+V_{11}& V_{12} \\
V_{21} & k\Lambda_2+V_{22}
\end{array}
\right]\left(
\begin{array}{c}
\zeta_1\\
\zeta_2
\end{array}
\right), \quad \zeta_1(0,k)=P_{[T^\alpha]}\phi^{(0)}, \zeta_2(0,k)=0
\end{equation}

Let $W(t,k)$ be defined as
\[
i\partial_t W=e^{ik\Lambda_1 t}V_{11}e^{-ik\Lambda_1 t} W, \quad
W(0,k)=I
\]
If
\[
\zeta_1=e^{-ik\Lambda_1 t}W \widetilde \zeta_1, \quad
\zeta_2=e^{-ik\Lambda_2 t}\widetilde \zeta_2
\]
then
\begin{equation}\label{blokada1}
i\partial_t\left(
\begin{array}{c}
\widetilde \zeta_1\\
\widetilde \zeta_2
\end{array}
\right) = \left[
\begin{array}{cc}
0 & W^{-1}e^{ik\Lambda_1 t}V_{12}e^{-ik\Lambda_2 t} \\
e^{ik\Lambda_2 t}V_{21}e^{-ik\Lambda_1 t}W & e^{ik\Lambda_2
t}V_{22}e^{-ik\Lambda_2 t}
\end{array}
\right]\left(
\begin{array}{c}
\widetilde \zeta_1\\
\widetilde \zeta_2
\end{array}
\right)
\end{equation}
For $\widetilde \zeta_1$,
\[
\widetilde \zeta_1(t,k)=\zeta^{(0)}-i\int_0^t
W^{-1}(\tau,k)e^{ik\Lambda_1\tau}V_{12}(\tau)e^{-ik\Lambda_2
\tau}\widetilde \zeta_2(\tau,k)d\tau
\]
We take an inner product with $\zeta^{(0)}$ and integrate by parts
\begin{equation}\label{sec-ter}
\langle \widetilde \zeta_1,\zeta^{(0)}\rangle
=\|\zeta^{(0)}\|^2-i\int_0^t \langle
e^{ik\Lambda_1\tau}V_{12}(\tau)e^{-ik\Lambda_2 \tau}\widetilde
\zeta_2(\tau,k), W(\tau,k)\zeta^{(0)}\rangle d\tau
\end{equation}
As $\|\widetilde \zeta_1\|^2+\|\widetilde
\zeta_2\|^2=\|\zeta^{(0)}\|^2$, it is sufficient to show that the
second term in (\ref{sec-ter}) is small to guarantee that
$\widetilde \zeta_1=\zeta^{(0)}+{\rm ``small"}$ and $\widetilde
\zeta_2={\rm ``small"}$ (see lemma \ref{o-triv} from Appendix).

 Let
\[
\widetilde V_{12}(t,k)=e^{ik\Lambda_1 t} V_{12}(t) e^{-ik\Lambda_2
t}
\]
and write
\[
\int_0^t \langle \widetilde V_{12}(\tau,k)\widetilde
\zeta_2(\tau,k), \widetilde y(\tau,k)\rangle d\tau=I_1+I_2
\]
Here
\[
\widetilde y(t,k)=W(t,k)\zeta^{(0)},\, I_1=\int_0^t \langle
\widetilde \zeta_2, \widetilde V_{12}^*P_{[T^\delta]} \widetilde
y\rangle d\tau, \, I_2=\int_0^t \langle \widetilde \zeta_2,
\widetilde V_{12}^*Q_{[T^\delta]} \widetilde y\rangle d\tau
\]
where $\delta$ will be chosen later. From the lemma \ref{auxil}, we
have
\[
\int_{-A}^A \sup_{t\in [0,T]}\| Q_{[T^\delta]} \widetilde
y(t,k)\|_2^2 dk\lesssim
(T^{2-2\gamma}+T^{2\beta})T^{-\delta}+T^{2-2\gamma+2\beta-\alpha}+T^{4-4\gamma-\alpha}
\]
since
\[
\int_{-A}^A \sup_{t\in [0,T]} \|Q_{[T^\delta]}\Bigl(\nu
f\Bigr)\|^2dk\lesssim T^{-\delta}(T^{2\beta}+T^{2-2\gamma})
\]

 Therefore, by Cauchy-Schwarz,
\[
\left(\int_{-A}^A \sup_{t\in [0,T]}|I_2(t,k)|^2
dk\right)^{1/2}\lesssim
T^{1-\gamma}\left((T^{1-\gamma}+T^{\beta})T^{-\delta/2}+T^{1-\gamma+\beta-\alpha/2}+
T^{2-2\gamma-\alpha/2}\right)\sim T^{-\epsilon_1}
\]
and
\[
\epsilon_1=\min\{\delta/2-2(1-\gamma), \delta/2-\beta-(1-\gamma),
\alpha/2-\beta-2(1-\gamma), \alpha/2-3(1-\gamma)\}
\]
which yields the following conditions
\[
2(1-\gamma)<\delta/2,\, 1-\gamma+\beta<\delta/2,\,
2(1-\gamma)<\alpha/2-\beta,\, 3(1-\gamma)<\alpha/2
\]
For $I_1$, we integrate by parts and use $\widetilde \zeta_2(0,k)=0$
to get
\[
I_1=\int_0^t \langle \widetilde \zeta_2', Q(\tau,t,k) \widetilde
y\rangle d\tau+\int_0^t \langle \widetilde \zeta_2, Q(\tau,t,k)
\widetilde y'\rangle d\tau
\]
and
\[
Q(\tau,t,k)=\int_\tau^t V_{12}^*(\tau_1,k)P_{[T^\delta]} d\tau_1
\]
For the derivatives, we have
\[
\|\widetilde \zeta'_2\|\lesssim T^{-\gamma}, \,\|\widetilde
y'\|\lesssim T^{-\gamma}
\]
and for the Hilbert-Schmidt norm of $Q$,
\[
\sup_{t,\tau\in [0,T]}\|Q\|^2_{\cal{S}_2}\lesssim
\sum_{|l|>T^\alpha}\sum_{|j|<T^\delta} \sup_{\tau\in [0,T]} \left|
\int_\tau^T \widehat
V_{l-j}(\tau_1)\exp\left(ik(l/T+\chi_{|l|<0.5T}l^2/T^2-j/T)\tau_1\right)d\tau_1
\right|^2
\]
The Carleson's theorem on the maximal function again gives
\[
\int \sup_{t,\tau\in [0,T]}\|Q\|^2_{\cal{S}_2} dk\lesssim
T^{1-2\gamma+1-\alpha}\cdot T^\delta
\]
as long as
\begin{equation}\label{cond1}
\delta<\alpha
\end{equation}
That yields an estimate for $I_1$
\[
\left(\int |\sup_{t\in [0,T]} I_1|^2dk\right)^{1/2}\lesssim
T^{2(1-\gamma)}\cdot T^{(\delta-\alpha)/2}
\]
Combining the bounds obtained above, we have
\[
\left(\int_{-A}^A \sup_{t\in [0,T]}|\langle \widetilde
\zeta_1(t,k),\zeta^{(0)}\rangle
-\|\zeta^{(0)}\|^2|^2dk\right)^{1/2}\lesssim T^{-\epsilon}
\]
where
\[
\epsilon=\min\{ \epsilon_1, (\alpha-\delta)/2-2(1-\gamma)\}
\]
so we add one more condition on the parameters
\begin{equation}\label{cond2}
\alpha>\delta+4(1-\gamma)
\end{equation}
Consequently, the lemma \ref{o-triv} from Appendix gives
\[
\int \sup_{t\in [0,T]} \|\zeta(t,k)-e^{-ik\Lambda_1
t}W(0,t,k)\zeta^{(0)}\|^4 dk\lesssim T^{-2\epsilon}
\]
since
\[
\widetilde \zeta_1=W^{-1}e^{ik\Lambda_1 t}\zeta_1
\]
For $e^{-ik\Lambda_1 t}W(0,t,k)\zeta^{(0)}$, the lemma \ref{auxil}
is applicable and we have
\[
\int_{-A}^A \sup_{t\in [0,T]}\|\phi-\nu
\phi^{(0)}(x+kt/T,k)\|^4dk\lesssim
T^{4\beta-2\alpha}+T^{-2\epsilon}+T^{2-2\gamma+2\beta-\alpha}+T^{4-4\gamma-\alpha}
\]
by using (\ref{col1}) and $\|\delta(x,k,t)\|\lesssim 1$. For
simplicity, let us make the following choices of our parameters:
\[
\delta=6(1-\gamma),\, \alpha=11(1-\gamma)
\]
and so
\[
\int_{-A}^A \sup_{t\in [0,T]}\|\phi-\nu \phi^{(0)}\|^4dk\lesssim
T^{-(1-\gamma)}
\]
Since we will need $\alpha<1/2$ (check (\ref{al1})), the condition
$\gamma>21/22$ follows.
\end{proof}
\bigskip

Now, we are ready to prove the similar statement for the problem
(\ref{rest}).
\begin{theorem}\label{diad1}
Let $\gamma\in [83/87,1)$, $|V(x,t)|<C(t+1)^{-\gamma}$, and
$u^{(0)}$ satisfies the following properties
\[
\sup_{k\in [-A,A]}\|u^{(0)}\|_{L^\infty(\mathbb{T})}\leq 1, \quad
\sup_{k\in [-A,A]}\|u^{(0)}\|_{H^{1/2}(\mathbb{T})}\lesssim
T^{1.5(1-\gamma)}
\]
where $A>1$ is fixed. Then, for the solution of (\ref{rest}), we
have
\begin{equation}\label{oce-2}
\int_{-A}^{A} \sup_{t\in [T,2T]}
\|u(x,t,k)-\nu(x,t,k)u^{(0)}(x+kt/T,k)\|^4dk\lesssim T^{-(1-\gamma)}
\end{equation}
\end{theorem}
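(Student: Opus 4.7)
\medskip
\noindent\textbf{Proof proposal.} The plan is to reduce Theorem \ref{diad1} to Theorem \ref{diad} by shifting time and absorbing the residual perturbation $V_1$ (which is precisely what distinguishes (\ref{rest}) from (\ref{diadic})) via Lemma \ref{lemma-d1}. First I would introduce $\tilde u(x,t,k) := u(x,t+T,k)$, so $\tilde u$ lives on $[0,T]$ and solves the shifted version of (\ref{rest}) with initial data $u^{(0)}$. Then let $\phi$ be the solution of (\ref{diadic}) with the same initial data $\phi^{(0)} := u^{(0)}$, and note that the two equations for $\tilde u$ and $\phi$ differ only by the summand $V_1$.

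By (\ref{reduc}), $\int_0^T \|V_1(\tau+T,k)\|\,d\tau \lesssim T^{2\alpha-1}$ uniformly in $k \in [-A,A]$. Applying Lemma \ref{lemma-d1} with $O_1 = V_1$, $j = 0$, and matching initial data, I obtain $\sup_{t \in [0,T]} \|\tilde u - \phi\| \lesssim T^{2\alpha - 1}$ for each $k$, and therefore $\int_{-A}^A \sup_{t \in [0,T]}\|\tilde u - \phi\|^4\,dk \lesssim T^{8\alpha - 4}$. Since $u^{(0)}$ satisfies exactly the hypotheses imposed on $\phi^{(0)}$ in Theorem \ref{diad}, that theorem controls $\phi$ against the WKB ansatz $\nu(x,t,k)\, u^{(0)}(x+kt/T,k)$ by $T^{-(1-\gamma)}$ in the same norm as (\ref{oce-1}).

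Combining via the triangle inequality together with $(a+b)^4 \leq 8(a^4 + b^4)$, and then undoing the time shift, yields
\[
\int_{-A}^{A} \sup_{t \in [T,2T]} \|u(x,t,k) - \nu(x,t,k)\,u^{(0)}(x+kt/T,k)\|^4\,dk \lesssim T^{-(1-\gamma)} + T^{8\alpha - 4}.
\]
The main obstacle is the balancing of parameters: one needs $\alpha$ to be simultaneously (i) smaller than $1/2$ (as in (\ref{al1})); (ii) large enough to satisfy the inner constraints of Theorem \ref{diad}, chiefly $\alpha > \delta + 4(1-\gamma)$ with $\delta > 5(1-\gamma)$; and (iii) small enough that $8\alpha - 4 \leq -(1-\gamma)$, so the $V_1$-error is absorbed by the target WKB error $T^{-(1-\gamma)}$. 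These three constraints together pin down the admissible range of $\gamma$, and threading them — possibly by a modest re-optimization of the $(\alpha,\delta)$ choice used in the proof of Theorem \ref{diad} — produces the stated condition $\gamma \geq 83/87$.
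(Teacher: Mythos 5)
Your proposal reproduces the paper's proof: shift the time interval to $[0,T]$, use Lemma \ref{lemma-d1} with (\ref{reduc}) to discard $V_1$ at a cost of $T^{4(2\alpha-1)}$, invoke Theorem \ref{diad} on the resulting problem (\ref{diadic}), and combine — this is precisely the bound (\ref{oce-3}). Your hesitation about whether the parameters really thread to $83/87$ is well placed: with the choice $\alpha=11(1-\gamma)$ actually used in the proof of Theorem \ref{diad}, the condition $8\alpha-4\le -(1-\gamma)$ gives $\gamma\ge 85/89$, and the requirement $\alpha<1/2$ already forces $\gamma>21/22>83/87$, so the stated threshold looks like a small numerical slip in the paper rather than a problem with the argument you describe.
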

\begin{proof}
Apply the theorem \ref{diad}, lemma \ref{lemma-d1}, and
(\ref{reduc}) to get
\begin{equation}\label{oce-3}
\int_{-A}^{A} \sup_{t\in [T,2T]}
\|u(x,t,k)-\nu(x,t,k)u^{(0)}(x+kt/T,k)\|^4dk\lesssim
T^{-(1-\gamma)}+T^{4(2\alpha-1)}
\end{equation}
If $\gamma\geq 83/87$, the first term is larger and we have the
statement of the theorem.\bigskip
\end{proof}

Let us define the solution to the following transport equation
\[
iG_t=ikG_x/(1+t)+VG, \quad G(x,0,k)=1
\]
\begin{lemma}
For an arbitrary fixed $A>0$, we have
\begin{equation}\label{strong}
\int_{-A}^A \sup_{t<T}
\|G(x,t,k)\|_{H^{1/2}(\mathbb{T})}^2dk\lesssim \int_0^T
(1+t)\int_\mathbb{T} V^2(x,t)dx dt
\end{equation}
\end{lemma}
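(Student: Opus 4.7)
Since $V$ is real, the solution of the transport equation is unimodular, so I would set $G=e^{i\mu(x,t,k)}$ with real-valued $\mu$ satisfying
\[
\mu_t-\frac{k}{1+t}\mu_x=-V,\qquad \mu(x,0,k)=0.
\]
This decouples on the Fourier side in $x$ into the scalar ODEs $\partial_t\widehat\mu_n+ikn(1+t)^{-1}\widehat\mu_n=-\widehat V_n(t)$, which are solved explicitly by an integrating factor:
\[
\widehat\mu_n(t,k)=-(1+t)^{-ikn}\int_0^t(1+s)^{ikn}\widehat V_n(s)\,ds.
\]
The key move is the substitution $\tau=\log(1+s)$, which turns the right-hand side (up to a unimodular prefactor) into a truncated Fourier integral,
\[
|\widehat\mu_n(t,k)|=\left|\int_0^{\log(1+t)}e^{ikn\tau}\,\widehat V_n(e^\tau-1)\,e^\tau\,d\tau\right|,
\]
and a further rescaling $\xi=n\tau$ puts it into the exact form required by Carleson's maximal theorem in $k$.

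Applying Carleson and then undoing the substitution via the Jacobian $e^\tau\,d\tau=(1+s)\,ds$ yields, for every $n\neq 0$,
\[
\int_{\mathbb{R}}\sup_{t<T}|\widehat\mu_n(t,k)|^2\,dk\lesssim \frac{1}{|n|}\int_0^T(1+s)\,|\widehat V_n(s)|^2\,ds.
\]
Multiplying by $|n|$, summing in $n\neq 0$ and using Plancherel on $\mathbb{T}$ I obtain
\[
\int_{\mathbb{R}}\sup_{t<T}\|\mu(\cdot,t,k)\|_{\dot H^{1/2}(\mathbb{T})}^2\,dk\lesssim \int_0^T(1+s)\int_{\mathbb{T}}V^2(x,s)\,dx\,ds.
\]
Restricting the outer integral to $[-A,A]$ only shrinks the left-hand side.

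To pass from $\mu$ to $G=e^{i\mu}$ I would invoke the second lemma of the Appendix (used exactly in this way in the proof of Lemma~\ref{auxil}, after \cite{Den1}), which controls the $H^{1/2}(\mathbb{T})$ norm of a unimodular exponential by the $H^{1/2}$ norm of its real phase; combined with the pointwise bound $|G|=1$, this yields \eqref{strong}. The heart of the argument, and the only nonroutine step, is the change of variables $\tau=\log(1+s)$: the coefficient $k/(1+t)$ means the transport is not directly a constant-rate rotation on the Fourier side, and without the logarithmic substitution the maximal integral in $t$ is not of a form to which Carleson applies. That the weight $(1+s)$ on the right-hand side pops out automatically from the Jacobian is what makes the estimate sharp in the correct sense.
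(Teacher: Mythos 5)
Your argument is correct and is essentially the paper's proof: the logarithmic substitution $\tau=\log(1+s)$ is exactly the change of variable $H(x,t)=G(x,e^{t}-1)$ that the paper uses to reduce to a constant-speed transport, and your inlined Carleson-plus-Plancherel computation for $\widehat\mu_n$ is precisely the content of the lemma from \cite{Den1} that the paper cites as a black box. One small correction: passing from $\mu$ to $G=e^{i\mu}$ is not the content of the Appendix's second lemma (that lemma is the Krein-algebra product bound); it follows instead from the chordal bound $|e^{ia}-e^{ib}|\leq|a-b|$ and the $H^{1/2}$ characterization displayed in its proof, i.e.\ from what the paper invokes as Lemma~6.2 of the Appendix of \cite{Den1}.
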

\begin{proof}
Consider
\[
H(x,t)=G(x,e^{t}-1)
\]
For $H$, we have
\[
iH_t(x,t,k)=ikH_x(x,t,k)+V_s(x,t)H(x,t,k),\quad H(x,0,k)=1
\]
where
\[
V_s(x,t)=e^tV(x,e^t-1)
\]
From \cite{Den1}, we have
\[
\int \sup_{t\in [0,\tau]} \sum_{j} |j||\widehat
H_j(t,k)|^2dk\lesssim \int_0^\tau \int_\mathbb{T} V_s^2(x,t)dxdt
\]
Taking $\tau=\log T$ with large $T$, we get the statement of the
lemma.
\end{proof}

 Now that we can control the behavior of $u$ on every diadic interval,
we can prove the main result of this section
\begin{theorem} Assume that $\gamma\in [83/87,1)$ and $V$ in (\ref{gaps}) satisfies
\[
\|V(x,t)\|_{L^\infty(\mathbb{T})}\leq \lambda (1+t)^{-\gamma}
\]
Then
\begin{equation}\label{asi}
\int_{-A}^A \sup_{t>0} \|u(x,t,k)-G(x,t,k)\|dk \to 0
\end{equation}
as $\lambda\to 0$. Here $A$ is any positive number.
\end{theorem}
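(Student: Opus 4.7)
The plan is to iterate Theorem~\ref{diad1} over a dyadic decomposition of the time axis and compare its output against the transport evolution $G$ on each piece. Set $T_j = 2^j$ and $I_j = [T_j, T_{j+1}]$ for $j \ge 0$; the initial interval $[0,1]$ is treated directly by Duhamel, since both $V$ and the Laplacian coefficient are bounded there. On each $I_j$ I would apply Theorem~\ref{diad1} with initial datum $u^{(0)}_j(x,k) := u(x,T_j,k)$. The two hypotheses on $u^{(0)}_j$ are to be verified inductively by comparison with $G(\cdot,T_j,k)$: the $L^\infty_x \le 1$ bound will follow once $u$ is known to be close to $G$ (and $|G|\equiv 1$), while $\|u^{(0)}_j\|_{H^{1/2}} \lesssim T_j^{3(1-\gamma)/2}$ reduces, modulo the inductive $u - G$ error, to the corresponding bound for $G$ supplied by \eqref{strong}: $\int_{-A}^A \sup_{t \le T_j}\|G\|_{H^{1/2}}^2\,dk \lesssim \lambda^2 T_j^{2-2\gamma}$, which is amply within the required growth once $\lambda$ is small.

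With Theorem~\ref{diad1} in hand on $I_j$, we have
\[
u(x,t,k) = \nu_j(x,t,k)\, u^{(0)}_j\bigl(x + k(t-T_j)/T_j,\,k\bigr) + R_j(x,t,k),
\]
with $\int_{-A}^A \sup_{t\in I_j}\|R_j\|^4\,dk \lesssim T_j^{-(1-\gamma)}$. I would then split $u - G$ on $I_j$ into three contributions: (i) the error $R_j$; (ii) $\nu_j\,[u^{(0)}_j - G(\cdot,T_j,k)](x + k(t-T_j)/T_j,k)$, whose $L^2_x$ norm equals the inductive quantity $\|u(\cdot,T_j,k)-G(\cdot,T_j,k)\|$ since $|\nu_j|=1$ and translations are $L^2_x$-isometries; (iii) $\nu_j\, G(x + k(t-T_j)/T_j,T_j,k) - G(x,t,k)$, the discrepancy between the local constant-speed transport of $G(\cdot,T_j,k)$ and the true varying-speed transport. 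For (iii), both terms are unimodular propagations of $G(\cdot,T_j,k)$ along characteristics, with spatial-shift gap $|k(t-T_j)/T_j - k\log((t+1)/(T_j+1))| \lesssim |k|$ on $I_j$ and phase-integral gap bounded by $\int_{I_j}\|V\|_\infty\,ds \lesssim \lambda T_j^{1-\gamma}$; via the elementary inequality $|e^{ina}-e^{inb}|^2 \le 4|n||a-b|$, the shift contributes $\lesssim |k|^{1/2}\|G(\cdot,T_j)\|_{H^{1/2}}$ in $L^2_x$, and combining with \eqref{strong} and Cauchy--Schwarz in $k$ yields a per-interval bound on (iii) of size $\lesssim \lambda T_j^{1-\gamma}$ on $[-A,A]$.

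Summing over $j$ reduces the theorem to controlling $\sum_j T_j^{-(1-\gamma)/4}$ (convergent, from the $R_j$'s) plus the formally divergent $\sum_j \lambda T_j^{1-\gamma}$ from piece (iii). The main obstacle is exactly this divergence, and its resolution relies on tracking the genuine $\lambda$-dependence of the constants: the factor $T^{-(1-\gamma)}$ in Theorem~\ref{diad1} carries a positive power of $\lambda$ inherited from $\|V\|_\infty \sim \lambda$ throughout the proofs of Theorems~\ref{diad}--\ref{diad1}, and \eqref{strong} already scales like $\lambda^2$. For any $\varepsilon > 0$ I would therefore choose a truncation $J = J(\lambda,\varepsilon)$ and estimate the finitely many dyadic contributions with $j \le J$ by the explicit per-interval bounds, which tend to zero as $\lambda \to 0$ for fixed $J$; the tail $j > J$ is controlled by noting that both $u$ and $G$ drift very little in the integrated-in-$k$ sense on $[T_J,\infty)$ once the additional power of $\lambda$ is factored out. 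A secondary technical nuisance is that Theorem~\ref{diad1} demands the $L^\infty_x$ and $H^{1/2}$ bounds uniformly in $k\in[-A,A]$ whereas \eqref{strong} is only an $L^2_k$ estimate; this gap is to be closed by passing to a ``good set'' of $k$ of near-full measure, and absorbing the exceptional set into the final $\varepsilon$. The narrow range $\gamma\in[83/87,1)$ inherited from Theorem~\ref{diad1} is precisely what makes the above truncation succeed.
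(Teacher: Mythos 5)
Your dyadic decomposition and iteration of Theorem~\ref{diad1} follows the paper, but the decision to compare $u$ directly against $G$ on each block, without an intermediate object, creates the divergence you noticed --- and your proposed repair does not close it. Your piece (iii), the mismatch between the one-step constant-speed propagation $\nu_j\,G(x+k(t-T_j)/T_j,T_j,k)$ and $G(x,t,k)$ itself, costs $\gtrsim \lambda T_j^{1-\gamma}$ per dyadic block; these accumulate through piece (ii), so after $J$ blocks the inherited error is $\sim \lambda T_J^{1-\gamma}$, which for any fixed $\lambda>0$ blows up as $J\to\infty$. The truncation you sketch would require an a priori bound showing $\sup_{t>T_J}\|u-G\|$ is small uniformly in $J$ once $\lambda$ is small, but that is exactly what is to be proved; asserting that ``$u$ and $G$ drift very little on $[T_J,\infty)$ once the power of $\lambda$ is factored out'' is circular. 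A further gap: the inductive $L^\infty$ hypothesis $\|u(\cdot,T_j,k)\|_{L^\infty}\le 1$ cannot be deduced from $L^2_x$-closeness of $u$ to the unimodular $G$, since $u$ solves a Schr\"odinger-type equation whose solutions need not stay bounded in $L^\infty(\mathbb{T})$.

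The paper sidesteps the accumulation by inserting the intermediate function $\nu$ solving $i\nu_t = ikq(t)\nu_x + V\nu$, $\nu(x,0,k)=1$, with the \emph{piecewise constant} speed $q(t)=T_j^{-1}$ on $[T_j,T_{j+1})$. With this choice the WKB factor of Theorem~\ref{diad1} on $[T_j,T_{j+1})$ is exactly the one-step propagation of $\nu$, so comparing $u$ with $\nu$ produces \emph{only} the summable $T_j^{-(1-\gamma)}$ remainders --- your piece (iii) simply never appears. The comparison of $\nu$ with $G$ is then a single global estimate $\sup_{t>0}\|\nu-G\|\to 0$ as $\lambda\to 0$, not a telescoping sum, so no divergent series arises. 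The uniform-in-$k$ hypotheses for the block theorem are packaged through the good set $\Omega_\lambda$ defined via the $H^{1/2}$-norms of $\nu$ at the dyadic times (using \eqref{strong} and \eqref{svede}), whose complement has measure tending to $0$ as $\lambda\to 0$; convergence is then obtained in measure in $k$ and upgraded to $L^1_k$ using $\|u\|=\|G\|=1$. You should introduce $\nu$ as the bridge; without it the argument as written has a genuine gap.
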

\begin{proof}
Take $T_j=2^j$ and consider the diadic intervals $[T_j,T_{j+1})$. On
any fixed interval $[0,T_j]$ we have
\[
\sup_{t\in [0,T_j]}\|u-1\|\to 0, \sup_{t\in [0,T_j]}\|G-1\|\to 0
\]
when $\lambda\to 0$. This convergence is  uniform in  $k\in [-A,A]$.
Therefore, it is sufficient to assume that we solve the problem on
the interval $[T_N,\infty)$ instead where $N$ is sufficiently large.

 The estimate (\ref{strong}) implies that
\[
\int_{-A}^A \|G(x,T_j,k)\|_{H^{1/2}(\mathbb{T})}^2dk\lesssim
T_j^{2-2\gamma}
\]
Therefore,
\begin{equation}\label{svede}
\int_{-A}^A \sum_j
\frac{\|G(x,T_j,k)\|_{H^{1/2}(\mathbb{T})}^2}{T_j^{2\beta}} dk
<\infty, \quad \beta\in (1-\gamma, 1.5(1-\gamma))
\end{equation}
If $\nu$ solves
\[
i\nu_t=ikq(t)\nu_x+V\nu,\quad  \nu(x,0,k)=1
\]
with $q$ given by: $q(t)=T_j^{-1},\, t\in [T_j, T_{j+1})$, then
\[
\sup_{t>0}\|\nu-G\|\to 0
\]
as $\lambda\to 0$ and similarly
\begin{equation}
\int_{-A}^A \sum_j
\frac{\|\nu(x,T_j,k)\|_{H^{1/2}(\mathbb{T})}^2}{T_j^{2\beta}} dk
<\infty, \quad \beta\in (1-\gamma, 1.5(1-\gamma))
\end{equation}
Then, we can consider $\Omega_\lambda$: the set of those $k\in
[-A,A]$ for which
\[
\sum_j \frac{\|\nu(x,T_j,k)\|_{H^{1/2}(\mathbb{T})}^2}{T_j^{2\beta}}
dk <1
\]
As $\lambda\to 0$, the measure of $\{[-A,A]\backslash
\Omega_\lambda\}$ will converge to $0$. Then, restricting  $k$ to
the set $\Omega_\lambda$, we can recursively apply theorem
\ref{diad1} to show that
\[
\sup_{t>0} \|u-G\|
\]
converges to zero in measure (as a function in $k\in
\Omega_\lambda$) provided that $\lambda\to 0$. This is equivalent to
(\ref{asi}) since $\|u\|=\|G\|=1$.
\end{proof}
{\bf Remark.} It is conceivable that the constant $83/87$ can be
decreased by more efficient choice of parameters. This theorem is
important since it can handle a difficult case of $\gamma<1$ for
very general class of pseudodifferential operators. Indeed, we used
the fact that $V$ is multiplication operator but the polynomial
$P(x)$ can be replaced by other nondegenerate symbols.

\section{Appendix}
We used the following elementary lemma in the main text.

\begin{lemma}\label{o-triv}
Suppose ${\cal V}$ is a vector space with the inner product and $v,
a$ are two vectors such that $v=v_1+v_2$, and
\[
v_1\perp v_2, \quad a\perp v_2, \quad \|v_1\|^2+\|v_2\|^2=\|a\|^2
\]
Then,
\begin{equation}\label{trivia}
\|v-a\|\leq 2\sqrt{|\|a\|^2-\langle v_1,a\rangle|}
\end{equation}
\end{lemma}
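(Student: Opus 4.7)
The plan is to exploit the two orthogonality hypotheses to turn $\|v-a\|^2$ into a short algebraic identity, and then to bound a real part by an absolute value. Everything is finite-dimensional style linear algebra; there is no analysis here.

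First, I would observe that since $v_1\perp v_2$ and $a\perp v_2$, the vector $v_1-a$ is orthogonal to $v_2$. Writing $v-a=(v_1-a)+v_2$, the Pythagorean identity gives
\[
\|v-a\|^2=\|v_1-a\|^2+\|v_2\|^2.
\]
Next I would expand the first term as $\|v_1-a\|^2=\|v_1\|^2+\|a\|^2-2\Re\langle v_1,a\rangle$ and use the hypothesis $\|v_1\|^2+\|v_2\|^2=\|a\|^2$ to collapse the $\|v_1\|^2+\|v_2\|^2$ piece into $\|a\|^2$. The upshot is the clean identity
\[
\|v-a\|^2=2\|a\|^2-2\Re\langle v_1,a\rangle=2\,\Re\bigl(\|a\|^2-\langle v_1,a\rangle\bigr).
\]

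The last step is to pass from the real part to the absolute value via $\Re z\leq |z|$, which yields
\[
\|v-a\|^2\leq 2\bigl|\|a\|^2-\langle v_1,a\rangle\bigr|,
\]
and hence $\|v-a\|\leq \sqrt{2}\sqrt{|\|a\|^2-\langle v_1,a\rangle|}$, a fortiori the claimed bound with constant $2$. (As a sanity check, the non-negativity of $\|v-a\|^2$ automatically forces $\|a\|^2-\Re\langle v_1,a\rangle\ge 0$, which is independently visible from Cauchy--Schwarz together with $\|v_1\|\le\|a\|$.)

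There is no real obstacle to this argument; the only thing one has to notice is the correct orthogonal decomposition of $v-a$, after which the identity writes itself. The slight slack between $\sqrt{2}$ and $2$ in the stated bound is harmless and accommodates the absolute value.
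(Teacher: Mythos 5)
Your argument is correct and in fact tighter than the paper's. After observing that $\langle v,a\rangle=\langle v_1,a\rangle$ (because $a\perp v_2$) and that $\|v\|^2=\|v_1\|^2+\|v_2\|^2=\|a\|^2$, the identity
\[
\|v-a\|^2=\|v\|^2-2\Re\langle v,a\rangle+\|a\|^2=2\|a\|^2-2\Re\langle v_1,a\rangle=2\,\Re\bigl(\|a\|^2-\langle v_1,a\rangle\bigr)
\]
follows immediately, and $\Re z\le|z|$ gives the bound with constant $\sqrt{2}$, which is stronger than the stated $2$; your orthogonal split $v-a=(v_1-a)+v_2$ is a correct (if slightly roundabout) way to reach the same expansion. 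The paper instead normalizes $\|a\|=1$, decomposes $v=v_2+\langle v_1,a\rangle a+\psi$ into three mutually orthogonal pieces, computes $\|v-a\|^2=\|v_2\|^2+\|\psi\|^2+|1-\langle v_1,a\rangle|^2=1-|\langle v_1,a\rangle|^2+|1-\langle v_1,a\rangle|^2$, and then bounds each of $1-|z|^2$ and $|1-z|^2$ by $2|1-z|$, arriving at the constant $4$ inside the square root. What your route buys is a sharper constant from a cleaner one-line identity; what the paper's route buys is nothing extra here (its estimate $1-|z|^2\le 2|1-z|$ and $|1-z|^2\le 2|1-z|$ simply discards the cancellation that your $2\Re(1-z)$ retains). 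Either proof suffices for how the lemma is used, since only the order of magnitude matters.
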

\begin{proof}
Assume first that $\|a\|=1$.  We have an orthogonal decomposition
\[
v=v_2+\langle v_1, a\rangle a+\psi
\]
Therefore
\begin{equation}\label{ortog}
\|v_2\|^2+\|\psi\|^2+|\langle v_1,a\rangle|^2=1
\end{equation}
and
\[
\|v-a\|^2=\|v_2\|^2+\|\psi\|^2+|1-\langle v_1,a\rangle|^2
\]
However, (\ref{ortog}) implies
\[
\|v_2\|^2+\|\psi\|^2=1-|\langle v_1,a\rangle|^2
\]
and so we have
\[
\|v-a\|^2\leq 4|1-\langle v_1,a\rangle|
\]
If $\|a\|\neq 1$, rescale the vectors by $\|a\|$ to get
(\ref{trivia}).
\end{proof}

The following lemma is quite standard in the description of Krein's
algebra (e.g., \cite{bosi}, p. 123, formula $(5.2)$ or \cite{simka},
proposition 6.1.10)
\begin{lemma}
Let $f,g\in H^{1/2}(\mathbb{T})\cap L^\infty(\mathbb{T})$, then
\[
\|fg\|_{H^{1/2}(\mathbb{T})}\lesssim \|f\|_{L^\infty(\mathbb{T})}
\|g\|_{H^{1/2}(\mathbb{T})}+\|g\|_{L^\infty(\mathbb{T})}
\|f\|_{H^{1/2}(\mathbb{T})}
\]
\end{lemma}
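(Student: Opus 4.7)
The plan is to prove this via the Gagliardo--Slobodeckij characterization of $H^{1/2}(\mathbb{T})$, which is the most elementary route to a Krein-algebra type estimate. Recall that, modulo equivalence of norms,
\[
\|f\|_{H^{1/2}(\mathbb{T})}^{2} \asymp \|f\|_{L^{2}(\mathbb{T})}^{2} + [f]_{1/2}^{2}, \qquad [f]_{1/2}^{2} := \iint_{\mathbb{T}\times\mathbb{T}} \frac{|f(x)-f(y)|^{2}}{d(x,y)^{2}}\,dx\,dy,
\]
where $d(\cdot,\cdot)$ is the geodesic distance on $\mathbb{T}$. So it suffices to bound $\|fg\|_{L^{2}}$ and $[fg]_{1/2}$ separately by the claimed right-hand side.

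First I would handle the $L^{2}$ piece trivially: $\|fg\|_{L^{2}} \leq \|f\|_{L^{\infty}} \|g\|_{L^{2}} \leq \|f\|_{L^{\infty}} \|g\|_{H^{1/2}}$, which is already of the required form. For the seminorm I would use the pointwise identity
\[
f(x)g(x) - f(y)g(y) = f(x)\bigl(g(x)-g(y)\bigr) + g(y)\bigl(f(x)-f(y)\bigr),
\]
apply the elementary inequality $(a+b)^{2} \leq 2a^{2}+2b^{2}$, and estimate the first summand in absolute value by $\|f\|_{L^{\infty}}|g(x)-g(y)|$ and the second by $\|g\|_{L^{\infty}}|f(x)-f(y)|$. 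Dividing by $d(x,y)^{2}$ and integrating, one obtains
\[
[fg]_{1/2}^{2} \lesssim \|f\|_{L^{\infty}}^{2}[g]_{1/2}^{2} + \|g\|_{L^{\infty}}^{2}[f]_{1/2}^{2}.
\]
Taking square roots, using $\sqrt{A^{2}+B^{2}}\leq A+B$ for nonnegative $A,B$, and combining with the $L^{2}$ bound gives exactly the stated inequality.

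There is essentially no obstacle here; the only point requiring a word of care is the equivalence of the Fourier definition of $\|\cdot\|_{H^{1/2}(\mathbb{T})}$ given in the introduction with the Gagliardo double integral, which is a standard computation (expand $f$ in Fourier series, use $\sum_{n\neq 0}(1-\cos(n\theta))/n^{2} \asymp |\theta|$ for $|\theta|\leq \pi$, and integrate in $\theta = x-y$). If one prefers to stay entirely on the Fourier side, an equivalent route is to write $\widehat{fg}_{n} = \sum_{m}\widehat f_{m}\widehat g_{n-m}$, split the sum according to whether $|m|\geq |n-m|$ or not, and exploit $(1+|n|)^{1/2}\lesssim (1+|m|)^{1/2}+(1+|n-m|)^{1/2}$ together with $\|\widehat f\|_{\ell^{\infty}}\leq \|f\|_{L^{\infty}}$; but this is slightly messier and the Gagliardo proof is the cleanest.
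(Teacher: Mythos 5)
Your proof is correct and follows essentially the same route as the paper: the paper also invokes the Gagliardo--Slobodeckij characterization $\|f\|^2_{H^{1/2}(\mathbb{T})}\sim \|f\|^2+\iint |f(x)-f(y)|^2|x-y|^{-2}\,dx\,dy$ and leaves the ensuing product-rule estimate to the reader. Your write-up simply spells out the algebraic splitting $f(x)g(x)-f(y)g(y)=f(x)(g(x)-g(y))+g(y)(f(x)-f(y))$ that the paper takes as immediate.
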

\begin{proof}
The proof immediately follows, e.g., from
\[
\|f\|^2_{H^{1/2}(\mathbb{T})}\sim
\|f\|^2+\int_\mathbb{T}\int_\mathbb{T} \frac{|f(x)-f(y)|^2}{|x-y|^2}
dxdy
\]
\end{proof}

\section{Acknowledgment}
This research was supported by NSF grant DMS-1067413.\vspace{2cm}

\bigskip

\end{document}